\documentclass{article}
\usepackage[utf8]{inputenc}
\usepackage[T1]{fontenc}
\usepackage[a4paper, total={6in, 10in}]{geometry}

\usepackage{amsmath}
\usepackage{amssymb}
\usepackage{amsfonts}
\usepackage{amsthm}

\usepackage{graphicx}
\usepackage{float}
\usepackage{cite}
\usepackage{url}
\usepackage{hyperref}

\usepackage{algorithm}
\usepackage{algorithmic}

\theoremstyle{plain}
\newtheorem{theorem}{Theorem}[section]         
\newtheorem{corollary}{Corollary}[section]     

\theoremstyle{definition}

\theoremstyle{remark}

\begin{document}

\title{Abstract}
\author{Sneha R}
\date{May 2026}

\title{Abstract}
\author{Sneha R}
\date{May 2026}

\title{ Dominated Coloring of Some Classes of Perfect and Unicyclic Graphs}

 \author{Sneha R, J. Geetha \  and K. Somasundaram}
\date{\small{Department of Mathematics, Amrita School of Physical Sciences \\ Amrita Vishwa Vidyapeetham,
  Coimbatore\\ Tamil Nadu, India.\\
 r\_sneha@cb.students.amrita.edu, \{j\_geetha, s\_sundaram\}@cb.amrita.edu}}
\maketitle

\begin{abstract}
 The dominated coloring of a graph $G$ is a proper vertex coloring such that each color class is dominated by a vertex. The minimum number of colors required for a dominated coloring of a graph $G$ is called the dominated chromatic number of the graph $G$ and is denoted by $\chi_{dom}(G)$. A graph $G$ is said to be chromatic dominated if $\chi(G)=\chi_{dom}(G)$. In this paper, we characterized the perfect graphs, which are chromatic dominated. Also, we studied the dominated chromatic number of some classes of unicyclic graphs.
\end{abstract}

\noindent \textbf{Keywords:} Dominated coloring; Dominated chromatic number; Perfect graphs; Unicyclic graph.
\noindent \textbf{MSC Classifications:} 05C15

\section{Introduction}
Let $G$ be a simple, connected, undirected graph with vertex set $V(G)$ and edge set $E(G)$. A proper vertex coloring of $G$ is an assignment of  colors to vertices of $G$ such that adjacent vertices are assigned different colors. A set of vertices assigned the same color is called a color class. The minimum number of colors among all proper vertex colorings of $G$ is called the chromatic number of the graph $G$ and is denoted by $\chi(G)$. 

A subset $S$ of $V(G)$ is said to be a dominating set of $G$ if all vertices that are not in $S$  have an edge with a vertex in $S$. If every vertex of $G$ has an edge with a vertex in $S$, then $S$ is said to be a total dominating set. The minimum cardinality among all dominating sets (total dominating sets) is known as the domination number (total domination number) of $G$ and is denoted by $\gamma(G)$ ($\gamma_{t}(G)$). A subset $S$ of $V(G)$ is set to be dominated by a vertex $v$, if all vertices of $S$ have an edge with $v$.

The proper vertex coloring of $G$ in which each color class is dominated by a vertex is said to be a dominated coloring of $G$. The minimum number of colors required for a dominated coloring of a graph $G$ is said to be the dominated chromatic number of $G$ and is denoted by $\chi_{\text{dom}}(G)$. The concept of dominated coloring was introduced by Merouane et al. \cite{merouane2015dominated}. They established lower and upper bounds for the dominated chromatic number and characterized the case of equality of the lower bound. They proved that the dominated coloring problem is NP-complete for arbitrary graphs with $\chi_{dom}\geq 4$ and also gave a polynomial time algorithm for recognizing graphs with $\chi_{dom}\leq 3$. A possible application of the dominated coloring problem is the development of interpersonal relationships in the social network, and it is discussed in \cite{chen2014dominated}. Fatemeh Choopani et al. \cite{choopani2018dominated} investigated the corona of two graphs and proved that Vizing-type conjecture holds for dominated colorings of the direct product of two graphs. They also obtained the dominated chromatic number of Mycielsky graph of $G$ in terms of $\chi_{dom}(G)$ and some Nordhaus-Gaddam-type results for the dominated chromatic number. Alikhani and Piri \cite{alikhani2019dominated} examined the effects on the dominated chromatic number of a graph when the graph is modified by operations on vertices and edges. They \cite{alikhani2019_1dominated} also obtained the dominated chromatic number of certain graphs. Fatemeh Choopani et al. \cite{choopani2022dominated} obtained the dominated chromatic number of total and middle graphs of paths and cycles. The dominated coloring of Cartesian product, direct product, lexicographic product, and strong product of some graphs are discussed in \cite{li2023dominated}. The dominated chromatic number of some networks, including circulant, Sierpinski, chain silicate, and cyclic silicate, is studied in \cite{poonkuzhali2024dominated} and that of several tree structures is studied in \cite{poonkuzhali2025dominated}. Azeddine et al. \cite{benkaci2025dominated} obtained a new bound for the dominated chromatic number and determined graphs attaining some of those bounds. They also characterized the cubic graphs whose dominated chromatic number and chromatic number are the same. 

\section{Chromatic Dominated Graphs}
A graph $G$ is said to be chromatic dominated if $\chi(G)=\chi_{dom}(G)$. It is easy to see that $K_n$ is chromatic dominated where as $P_n$ and $C_n$ are not chromatic dominated. The characterization of graphs into chromatic dominated is an interesting open problem. Azeddine et al. \cite{benkaci2025dominated} proposed the following open problem.\\

\noindent \textbf{Open Problem:}\cite{benkaci2025dominated} Characterization of graphs with $\chi(G)=\chi_{dom}(G)$.\\

In this section, we address the open problem for some classes of perfect graphs. A graph $G$ is said to be perfect if $\chi(G)=\omega(G)$, where $\omega(G)$ is the clique number of $G$. The important subclasses of perfect graphs include block graphs, split graphs, co-bipartite graphs, and $k$-trees. A split graph is a graph whose vertex set can be partitioned into a clique and an independent set. Merouane et al. \cite{merouane2015dominated}  showed that, for any split  graph $G$, $\chi_{dom}(G)=\omega(G)$.Therefore any split graph is chromatic dominated.
\subsection{Co-bipartite Graph}
   A co-bipartite graph is a graph $G$ whose complement is bipartite. Equivalently, $G$ is co-bipartite if its vertex set can be partitioned into two cliques. In other words, there exist disjoint sets $A$ and $B$ such that $A\cup B=V(G)$, $A$ and $B$ form two disjoint cliques. Let $|A|=m$ and $|B|=n$, $m \geq n$. Let $M$ be a maximum matching between $A$ and $B$ and $|M|=k$. Let $S$ be the set of all edges between $A$ and $B$, that are not in $M$. Let $N$ be a subset of $S$ such that each edge in $N$  saturates only one vertex in $M$ and $r=|N|$.\\

In the next theorem, we characterized the co-bipartite graph.
\begin{theorem}
 Let $G$ be a co-bipartite graph. Then   \begin{center}
        $\chi_{dom}(G)=
        \begin{cases}
           m+n-(2k+r), & if \ 2k+r\leq n  \\
           m, & otherwise.
        \end{cases}$
    \end{center}
\end{theorem}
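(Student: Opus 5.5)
The plan is to reduce dominated colorings of a co-bipartite graph to a matching problem and then count. Since $A$ and $B$ are cliques, every color class of a proper coloring of $G$ meets each of $A$ and $B$ in at most one vertex. So a proper coloring is the same as choosing a set $P$ of disjoint pairs $\{a,b\}$ with $a\in A$, $b\in B$, $ab\notin E(G)$, and giving every other vertex its own color. The number of colors is then $m+n-|P|$.

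Next I will check which such colorings are dominated. A singleton class $\{x\}$ is dominated by any neighbour of $x$, and every vertex has one because $G$ is connected. A pair class $\{a,b\}$ is dominated exactly when $a$ and $b$ have a common neighbour. Such a neighbour is either a vertex $a'\in A$ adjacent to $b$, or a vertex $b'\in B$ adjacent to $a$. In other words, the class is dominated exactly when at least one of $a,b$ has a neighbour on the other side; this is where the cross edges $M$ and $S$ enter. Hence
\[
\chi_{dom}(G)=m+n-p,
\]
where $p$ is the maximum number of disjoint admissible pairs. Since $A$ is a clique, we always have $\chi_{dom}(G)\ge \chi(G)\ge m$, so $p\le n$. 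This explains the second case of the statement.

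The core step is to show that $p=\min\{n,\,2k+r\}$. I read the paper's definitions so that $2k+r$ counts the vertices that can serve as the ``anchored'' endpoint of an admissible pair. These are the $2k$ vertices saturated by the maximum cross matching $M$, together with the $r$ additional vertices reached by edges of $N$, each of which saturates only one vertex of $M$. For the upper bound I will argue that every admissible pair contains a vertex incident to a cross edge. I will then use the maximality of $M$, in the usual augmenting-path style, to injectively charge each pair to a distinct vertex among these $2k+r$. This gives $p\le 2k+r$, and $p\le n$ holds trivially.

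For the lower bound I will construct a pairing explicitly. For each edge $ab\in M$, pair $a$ with a nonneighbour $b''\in B$ and pair $b$ with a nonneighbour $a''\in A$; the common neighbour is $b$, respectively $a$. For each edge of $N$, pair its unsaturated endpoint similarly. This continues until $\min\{n,2k+r\}$ pairs are formed. In the case $2k+r\le n$ this gives $\chi_{dom}(G)=m+n-(2k+r)$. Otherwise the pairing can be extended to cover all of $B$, giving exactly $m$ colors, which matches the lower bound $m$.

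I expect the main obstacle to be the counting in the upper bound and the existence of enough nonneighbours in the construction. Vertices of $M$ must not be double-counted, and the pairs I build must really be non-edges that are disjoint from one another. I will handle this by a careful case analysis on whether the partner chosen for an $M$-vertex is itself $M$-saturated. When the partners are not disjoint, I will swap along alternating paths. Finally, I will check the boundary situations where $B$ is small (for instance $n=1$) or where a vertex is complete to the other side, since there the pairing may be forced.
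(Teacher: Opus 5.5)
\textbf{Gap: the statement is false, and the step that fails is your lower-bound construction $p\ge\min\{n,2k+r\}$.} Your reduction is correct. Colour classes are singletons or non-adjacent pairs $\{a,b\}$ with $a\in A$, $b\in B$. Such a pair is dominated iff $a$ or $b$ has a cross neighbour, and so $\chi_{dom}(G)=m+n-p$. Your bound $p\le\min\{n,2k+r\}$ is also correct. An $M$-unsaturated vertex $x$ with a cross edge $xy$ forces $y\in V(M)$ by maximality, so $xy\in N$, and distinct such $x$ give distinct edges of $N$. This optimality half is something the paper's proof never supplies. The reverse inequality, however, is false, so no case analysis or alternating-path swapping can rescue your construction. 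Take $G=K_4$ with $A=\{a_1,a_2\}$, $B=\{b_1,b_2\}$. Then $k=2$, so $2k+r>n$ and the theorem predicts $\chi_{dom}(G)=m=2$. But there is no non-adjacent cross pair at all, so $p=0$ and $\chi_{dom}(G)=4$. Deleting the edge $a_2b_1$ gives $\chi_{dom}(G)=3$, while the formula still says $2<\chi(G)$. The vertices ``complete to the other side'' that you postpone to a boundary check are exactly the problem: they can never lie in a pair class, yet they are counted in $2k+r$.

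There is a second, independent failure hidden in your reading of $r$. You count the $r$ additional \emph{vertices} reached by $N$, but the paper defines $r=|N|$, an edge count, and an unsaturated vertex may carry several edges of $N$. Suppose the cross edges form a $K_{2,3}$ between $\{a_1,a_2\}$ and $\{b_1,b_2,b_3\}$, and $m=n=6$. Every maximum matching gives $k=2$ and $r=2$, so the formula gives $12-6=6$. Only five vertices have cross neighbours, so $p\le 5$. In fact $p=5$, via $a_1b_4,a_2b_5,a_3b_1,a_4b_2,a_5b_3$, so $\chi_{dom}(G)=7$. The paper's proof uses the same recolouring scheme as your construction and makes the same unchecked assumptions: that each recoloured neighbour is non-adjacent to the vertex whose colour it takes, and that all recoloured vertices are distinct. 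So it has the same hole. What your argument genuinely proves is $\chi_{dom}(G)=m+n-\nu(H)$, where $\nu(H)$ is the maximum matching size in the bipartite graph $H$ of non-adjacent pairs $ab$ ($a\in A$, $b\in B$) in which $a$ or $b$ has a cross neighbour. It also proves $\chi_{dom}(G)\ge m+n-\min\{n,2k+r\}$. Equality in the latter needs additional hypotheses.
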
    
\begin{proof}
Let $G$ be a co-bipartite graph with $V(G)=A\cup B$.  Let $M$ be a maximum matching in $G$ with $|M|=k$  between $A$ and $B$. 

Since $A$ and $B$ are cliques, we assign $m$ and $n$ colors to $A$ and $B$ respectively. Now consider an edge $e=(uv)\in M$. The edge $e$ allows the reduction of two colors in the dominated coloring of $G$ in such a way that any one of the neighbors of $u$ can be colored with the color of $v$ and any one of the neighbors of $v$ can be colored with the color of $u$. Thus, the $k$ edges together reduce the number of colors required for a dominated coloring of $G$ by $2k$.

Let $N$ be the set of edges between $A$ and $B$ such that each edge in $N$  saturates only one vertex in $M$ and $r=|N|$. Now, consider an edge $e'=(xy)\in N$ saturates $y$, $y\in V(M)$ (in particular, $x\in A$ and $y\in B$. 

When $n-2k\geq r$, we recolor a neighbor vertex of $y$ in $B$ with the color of $x$. Hence, the edge $e'$ allows us to reduce one color from the clique $B$. Similarly, we recolor the $r$ vertices in the clique $B$ since $|N|=r$. Therefore the $k+r$ edges together reduce $2k+r$ colors in $G$. Fig. \ref{cobipartite1} shows a graph with $n-2k\geq r$. Hence,    $\chi_{dom}(G)=m+n-(2k+r)$.

If $n-2k<r$, every vertex in $B$ is already recolored with colors used in $A$, since $2k+r<n$. In this case the  dominated coloring of $G$ uses exactly $m$ colors. Fig. \ref{cobipartite2} shows a graph with $n-2k\geq r$. Therefore, we have $\chi_{dom}(G)=m$.

\begin{figure}[ht]
    \centering
    \begin{minipage}{0.45\textwidth}
        \centering
        \includegraphics[width=\textwidth]{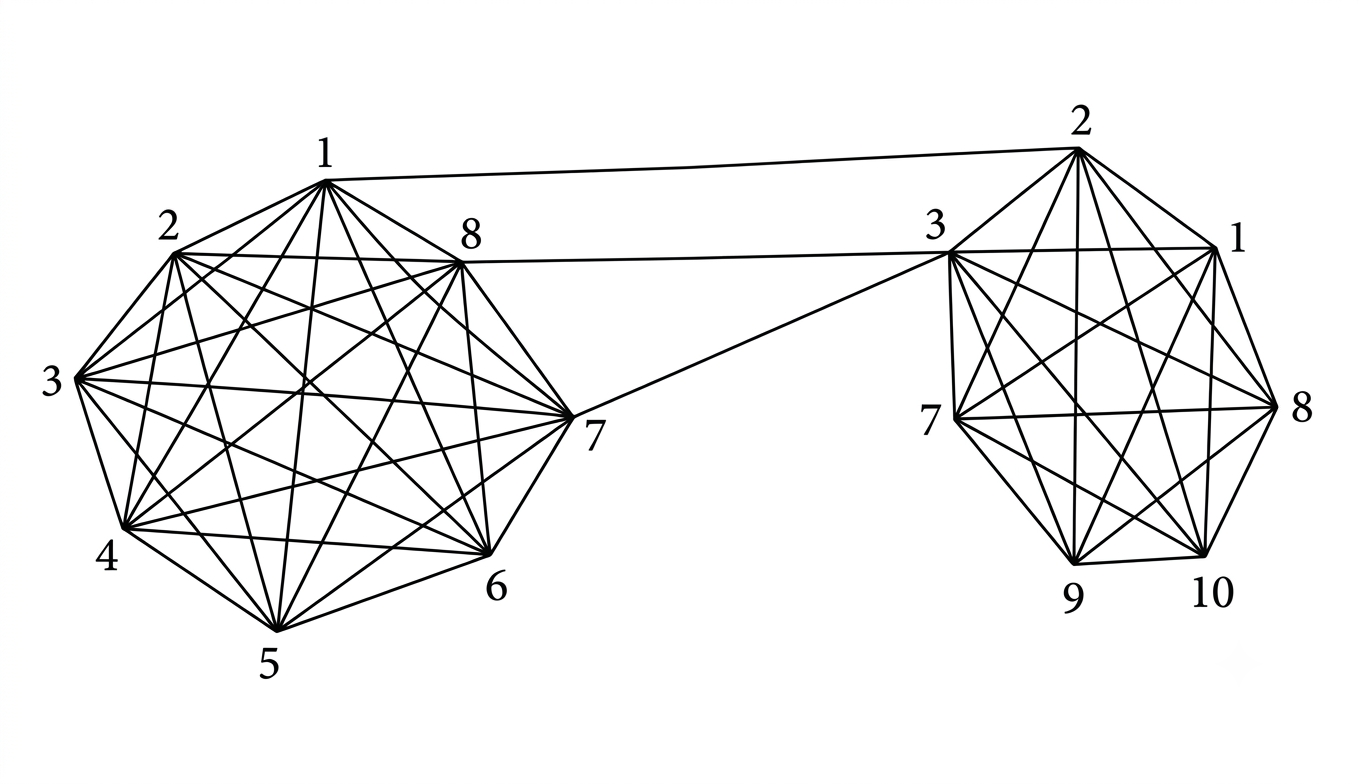}
        \caption{Co-bipartite graph with $n-2k\geq r$.}
        \label{cobipartite1}
    \end{minipage}
    \hfill
    \begin{minipage}{0.51\textwidth}
        \centering
        \includegraphics[width=\textwidth]{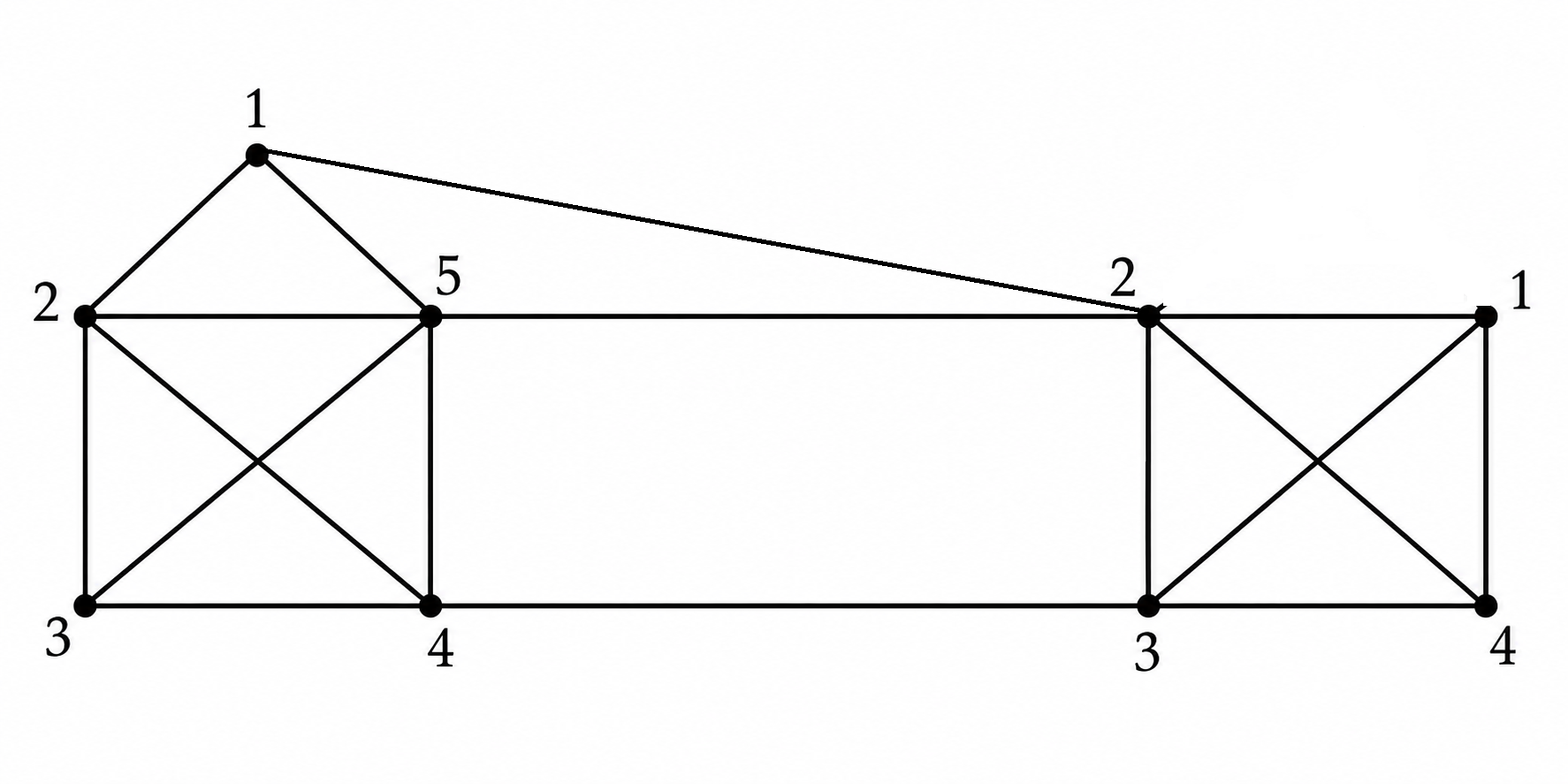}
        \caption{Co-bipartite graph with $n-2k<r$.}
        \label{cobipartite2}
    \end{minipage}
\end{figure}
    
\end{proof}   
\subsection{Dominated Coloring of Block Graphs}
A block is a maximal bi-connected component of a graph. A block graph is a graph in which every block is a clique \cite{harary1963characterization}. Block graphs are another important class of perfect graphs. In the next theorem, we resolve the open problem for the block graphs. 

We construct a  family $\mathcal{F}$ of block graphs  as follows: Consider a clique $C$ with $n$ vertices and attach cliques $C_i^j, \ j=0,1,\dots,m$ with the vertex $i, i=1,\dots,n$, such that $\sum\limits_i^n (\omega_i-1)\leq n$, where $\omega_i=\omega(C_i^j)$. 
\begin{theorem}
    A block graph $G$ is chromatic dominated if and only if $G\in\mathcal{F}$.
\end{theorem}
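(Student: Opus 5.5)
The plan is to prove the two directions separately. The guiding fact is that in a block graph $\chi(G)=\omega(G)$. Moreover, a set dominated by a vertex $v$ lies in $N(v)$, and $N(v)$ is a disjoint union of cliques minus $v$, one for each block containing $v$. So a color class dominated by $v$ is an independent set in $N(v)$. It therefore contains at most one vertex from each block through $v$, and in particular never contains $v$ itself.

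\emph{Sufficiency.} Let $G\in\mathcal{F}$, with central clique $C=\{1,\dots,n\}$ and pendant cliques $C_i^j$ attached at vertex $i$. I will aim for $\omega(G)=n$, which follows from the size condition $\sum_i(\omega_i-1)\le n$ as long as no pendant clique is larger than $C$. I then build an explicit $n$-coloring. Vertex $i$ of $C$ gets color $i$, and the class of color $i$ is meant to be dominated by one vertex of $C$ other than $i$ (for instance vertex $i+1$ taken cyclically). The non-attachment vertices of the cliques at $i$ number at most $\omega_i-1$ per clique. I give them colors different from $i$, drawn from a pool of colors chosen so that each color $c$ is used on pendant vertices hanging at only one attachment vertex, say $a(c)$, and so that $a(c)$ dominates class $c$. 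This requires $a(c)$ to be adjacent both to vertex $c$ of $C$ and to the pendant vertices of color $c$, and the latter all hang at $a(c)$. The budget $\sum(\omega_i-1)\le n$ is exactly what allows an injective choice of such colors: a Hall-type or greedy assignment matches the pendant "slots" to distinct colors $c\ne i$. The main step to check is that this assignment avoids $c=a(c)$. This can fail only when a single $i$ carries all $n$ slots, and that case I will handle by hand, using a vertex of $C_i^j$ as a dominator when needed.

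\emph{Necessity.} Suppose $G$ is a block graph with $\chi_{dom}(G)=\omega(G)=\omega$, and fix a dominated coloring with $\omega$ classes. First I show that all the dominators lie in a common clique. Counting gives that every maximum clique $K$ meets every color class, and each class is contained in the neighborhood of its dominator. A vertex far from $K$ cannot dominate a class that meets $K$ together with distant vertices. More precisely, I plan to use the tree (block--cutvertex) structure to show that there is a block $C$ such that every vertex lies in $C$ or in a block adjacent to $C$. The reason is that a vertex at distance at least $2$ from $C$ in the block tree, together with its class, forces a dominator outside $C$, and this would require more than $\omega$ colors. This establishes that $G$ has the shape of $\mathcal{F}$.

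Then I count: each vertex $i\in C$ cannot receive its own color on its pendant neighbors, and the dominator of each color class is a single vertex. Combining these yields $\sum(\omega_i-1)\le n$.

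I expect the necessity direction to be the main obstacle, specifically the proof that the blocks form a "star" around a single clique. My first attempt will be an extremal argument. Take a longest path in the block--cutvertex tree, look at a leaf block $L$ at its end, and show that the color classes of vertices in $L$ must be dominated by the cut vertex of $L$ or by vertices of neighbouring blocks. Then derive a contradiction with $\omega$ colors whenever the path has length more than $4$.
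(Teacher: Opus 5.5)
Your sufficiency half is fine and is a more careful version of the paper's argument. The Hall-type matching of pendant slots to colours $c\neq i$ works whenever every $\omega_i-1\le n-1$. The exceptional case $\omega_i-1=n$ exhausts the budget, so every block passes through $i$, and that is the common-vertex corollary.

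The necessity half cannot be completed. Both of its steps are false, and they are exactly the assertions in the paper's Cases (1) and (2).

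First, the structural lemma fails. Take the triangle $123$, a path $1ab$ hanging at $1$, and a pendant vertex $d$ at $2$. The classes $\{1,b\}$, $\{2,a\}$, $\{3,d\}$ are independent and are dominated by $a$, $1$, $2$ respectively, so $\chi_{dom}=3=\chi$. Yet the blocks $\{a,b\},\{1,a\},\{1,2,3\},\{2,d\}$ form a path in the block tree. So no block meets all the others, and the graph is not in $\mathcal{F}$. Your heuristic that a dominator outside $C$ costs an extra colour is what breaks here. The vertex $a$ dominates $\{1,b\}$ at no cost, because $b$'s class only needs one vertex of the maximum clique that has a common neighbour with $b$.

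Second, the counting step fails even when a central block does exist. Delete $d$ to get the tadpole $T_{3,2}$. The colouring $\{1,b\},\{2,a\},\{3\}$ is still dominated. The only block meeting all others is $C=\{1,a\}$, with $n=2$, but $\sum(\omega_i-1)=2+1=3>n$. What your count really proves is weaker. Pendant vertices at distinct attachment vertices have no common neighbour, so they receive disjoint colour sets, and this gives only $\sum(\omega_i-1)\le\omega(G)$. That bound exceeds $n$ whenever $C$ is not a maximum clique.

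So the ``only if'' direction of the statement is false as written, and no refinement of your extremal block-tree argument can rescue it. A correct necessity statement has to start from what is actually forced. For example, in an $\omega(G)$-colouring every class meets each maximum clique $K$ in exactly one vertex. Hence every vertex outside $K$ is at distance at most $2$ from $K$, a condition both examples above satisfy.
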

\begin{proof}
Let $V(C)=\{1,\dots,n\}$, we color the clique $C$ with $n$ colors. We take $\omega_i-1$ colors from the $n$ colors of $C$ and color the satellite cliques $C_i^j$ attached in the vertex $i$ (except the vertex $i$). The maximum number of colors taken from $V(C)$ to color the satellite cliques is $\sum\limits_i^n (\omega_i-1)$. As $\sum\limits_i^n (w_i-1)\leq n$, we required $n$ colors for the graph $\mathcal{F}$.\\

\noindent Conversely, suppose $G$ is chromatic dominated and $G\notin\mathcal{F}$.\\

\noindent Case (1): If the cliques are not attached as per the construction of $\mathcal{F}$.\\

In this case, at least one clique, say $Q$, is not attached with $C$. We need more colors to color the clique $Q$ for the domination. Hence $\chi_{dom}(G)>\omega(G)$, which is a contradiction.\\

\noindent Case (2): $\sum\limits_i^n (w_i-1) > n$.\\
In this case, we need more than $n$ colors to color the satellite cliques $C_i^j$ while satisfying the domination condition. Hence $\chi_{dom}(G)>n$, which is a contradiction.

\end{proof} 
We prove that if all the cliques share a vertex in a block graph then the graph is chromatic dominated. 
\begin{corollary}\label{coro}
 Let $G$ be a block graph such that all the blocks share a common vertex. Then         $\chi_{dom}(G)=\omega(G)$.
\end{corollary}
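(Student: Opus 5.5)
The plan is to build an explicit dominated coloring with exactly $\omega(G)$ colors and to pair it with the trivial lower bound $\chi_{dom}(G)\geq \chi(G)\geq \omega(G)$. The lower bound holds because every dominated coloring is in particular a proper coloring, and any proper coloring needs at least $\omega(G)$ colors on a maximum clique. Everything therefore reduces to the upper bound $\chi_{dom}(G)\leq \omega(G)$.

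Let $v$ be the common vertex and let $Q_1,\dots,Q_t$ be the blocks of $G$. Each block is a clique containing $v$, and two distinct blocks share only $v$. The key structural observation is that $G$ has no edges between $Q_i\setminus\{v\}$ and $Q_j\setminus\{v\}$ for $i\neq j$. If such an edge existed, it would lie in some block, and that block would meet both $Q_i$ and $Q_j$ in more than the single cut vertex $v$, contradicting maximality of blocks. Also, every vertex other than $v$ lies in some $Q_i$, so it is adjacent to $v$; in other words $v$ is a universal vertex. Finally, $\omega(G)=\max_i |Q_i|$.

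Coloring: give $v$ color $1$. For each $i$, give the $|Q_i|-1$ vertices of $Q_i\setminus\{v\}$ the distinct colors $2,\dots,|Q_i|$. This uses at most $\max_i|Q_i|=\omega(G)$ colors. It is proper because:
\begin{itemize}
\item inside each block the colors are distinct;
\item vertices in different blocks that share a color are nonadjacent by the observation above;
\item no vertex other than $v$ receives color $1$.
\end{itemize}

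Domination of the classes:
\begin{itemize}
\item Each class $j\geq 2$ consists of vertices different from $v$, all adjacent to $v$, so $v$ dominates it.
\item The class of color $1$ is $\{v\}$. It is dominated by any neighbour of $v$, for instance any other vertex of a largest block.
\end{itemize}
The only mild obstacle is this last point, which needs $G\neq K_1$, so that $v$ has a neighbour. The case $G=K_1$ is degenerate, with $\chi_{dom}=\omega=1$ under the usual convention. Hence $\chi_{dom}(G)\leq\omega(G)$, and with the lower bound, $\chi_{dom}(G)=\omega(G)$.
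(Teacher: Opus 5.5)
Your proof is correct and follows the same route as the paper: color the blocks with $\omega(G)$ colors so that the common vertex dominates every color class. Your version is more careful than the paper's one-line argument, since you state the lower bound $\chi_{dom}(G)\ge\omega(G)$, justify properness across blocks, and note that the singleton class of the common vertex must be dominated by one of its neighbours rather than by the vertex itself, a point the paper skips.
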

\begin{proof}
Let $\omega(G)=m$ and $u$ be the common vertex of all the $j$ blocks. Color all the blocks with $m$ colors. The vertex $u$ will dominate all color classes in $C_u^j$. Hence $\chi_{dom}(G)=\omega(G)$.
\end{proof}

Chain block graphs are connected block graphs with their blocks arranged in a linear sequence. If all the blocks are of the same size, the graph is called a uniform chain block graph. Let $K_m^n$ be a uniform chain block graph with $n$ blocks, $n\geq2$, each of size $m$. 
\begin{theorem}
\[\chi_{dom}(K_m^n)=\begin{cases}
            \frac{n(m-1)}{2}+1, & if\ n\ is\ even \\
            \frac{(n+1)(m-1)}{2}, & if \ n\ is\ odd.
        \end{cases} \]
\end{theorem}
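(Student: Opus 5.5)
The plan is to match an explicit colouring against a counting lower bound built on one structural fact: in a dominated colouring of $K_m^n$ every colour class has at most two vertices, and the two-vertex classes are very restricted.

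Label the blocks $B_1,\dots,B_n$ and the cut vertices $c_1,\dots,c_{n-1}$, with $c_j=B_j\cap B_{j+1}$. Then $|V(K_m^n)|=n(m-1)+1$.

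\textbf{Local structure of colour classes.} A colour class is an independent set lying inside $N(v)$ for its dominating vertex $v$. If $v$ is not a cut vertex, $N(v)$ is a clique, so the class is a single vertex. If $v=c_j$, then
\[
N(c_j)=(B_j\cup B_{j+1})\setminus\{c_j\}
\]
is a union of two cliques of size $m-1$. So the class has at most two vertices, and a two-vertex class $\{x,y\}$ has $x\in B_j\setminus\{c_j\}$ and $y\in B_{j+1}\setminus\{c_j\}$. I will say such a pair \emph{sits at} $c_j$.

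Hence $\chi_{dom}(K_m^n)=|V|-P$, where $P$ is the maximum number of vertex-disjoint such pairs. The whole proof reduces to computing $P$.

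\textbf{Upper bound on $P$.} Let $p_j$ be the number of pairs sitting at $c_j$. Each such pair uses one vertex of $B_j$ and one of $B_{j+1}$, so $p_j\le m-1$. I would then bound the pairs meeting each block: $p_{j-1}+p_j$ is at most the number of vertices of $B_j$ available to them. This count must be done carefully, because $c_{j-1}$ and $c_j$ each lie in two blocks, and a vertex may be claimed by either neighbouring block. Summing these constraints over a suitable family of disjoint consecutive index pairs, with the parity of $n$ deciding whether an end block is left over, should give
\[
P\le \lfloor n(m-1)/2\rfloor \quad (n \text{ even}), \qquad P\le \tfrac{(n-1)(m-1)}{2}+\text{correction} \quad (n \text{ odd}).
\]
Subtracting from $n(m-1)+1$ would then give the two claimed values. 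For $n$ even the naive bound "classes of size $\le 2$" already gives
\[
\chi_{dom}\ge \left\lceil \frac{n(m-1)+1}{2}\right\rceil=\frac{n(m-1)}{2}+1,
\]
which matches the formula.

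\textbf{Construction.} For $n$ even, I group the blocks as $(B_1,B_2),(B_3,B_4),\dots$. At $c_1,c_3,\dots,c_{n-1}$ I pair the $m-1$ vertices of $B_{2i-1}\setminus\{c_{2i-1}\}$ bijectively with those of $B_{2i}\setminus\{c_{2i-1}\}$. These pairs are independent, since they lie in different blocks and neither is the shared cut vertex, and each is dominated by $c_{2i-1}$. This gives $n(m-1)/2$ pairs, and only $c_{2i-1}$ for each $i$ remains. I would then reuse the leftover cut vertices by pairing them through the even cut vertices, as in the $n=3$, $m=3$ example where $\{f,c_1\}$ is dominated by $c_2$. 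The aim is to end with exactly $n(m-1)/2+1$ classes. For $n$ odd, the same pairing covers $B_1,\dots,B_{n-1}$, and $B_n$ is handled through $c_{n-1}$ together with the leftover vertices. I would check that the total is $(n+1)(m-1)/2$, and that the small cases $m=2$ (paths) and $n=3$, $m=3$ (four colours) agree.

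\textbf{Main obstacle.} The hardest step is the $n$ odd lower bound. The per-block constraints $p_{j-1}+p_j\le m$ seem to allow roughly $(n-1)m/2$ pairs. That would give fewer colours than $(n+1)(m-1)/2$ once $n\ge5$ and $m\ge3$. So I first have to decide, by trying $n=5$, $m=3$ by hand, whether the stated formula is actually attained there or a better colouring exists.

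If the formula holds, the extra rigidity has to come from the cut vertices. The pairs at $c_{j-1}$ and at $c_j$ compete for the interior vertices of $B_j$, so the claim to aim for is that consecutive $p_j$ cannot both be large. I would try to prove $p_{j-1}+p_j\le m-1$ whenever $B_j$ is an interior block. If instead the hand computation beats the formula, the statement needs correcting before this plan can be completed.
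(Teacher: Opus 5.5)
\textbf{The odd case cannot be completed, because the statement is false there.} Your reduction $\chi_{dom}(K_m^n)=n(m-1)+1-P$ is correct, with $P$ the maximum number of disjoint pairs at distance two through a cut vertex. Take $m=4$, $n=5$, with
$B_1=\{a_1,a_2,a_3,c_1\}$, $B_2=\{c_1,b_1,b_2,c_2\}$, $B_3=\{c_2,e_1,e_2,c_3\}$, $B_4=\{c_3,f_1,f_2,c_4\}$, $B_5=\{c_4,d_1,d_2,d_3\}$. The following eight pairs partition all $16$ vertices:
\begin{itemize}
\item at $c_1$: $\{a_1,b_1\},\{a_2,b_2\},\{a_3,c_2\}$;
\item at $c_2$: $\{c_1,e_1\}$;
\item at $c_3$: $\{e_2,c_4\}$;
\item at $c_4$: $\{c_3,d_1\},\{f_1,d_2\},\{f_2,d_3\}$.
\end{itemize}
Hence $\chi_{dom}(K_4^5)=8$, while the formula gives $9$.

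More generally, take $n=5$ and $m\ge 4$. Pair $B_1\setminus\{c_1\}$ with $B_2\setminus\{c_1\}$ at $c_1$, pair $B_4\setminus\{c_4\}$ with $B_5\setminus\{c_4\}$ at $c_4$, and pair $c_1$ and $c_4$ with two interior vertices of $B_3$. This gives $2m$ pairs. No pair can contain two of the $3m-4$ interior vertices of $B_1,B_3,B_5$, since any two of them are at distance $1$ or at least $3$. So every pair uses one of the remaining $2m$ vertices, and $P\le 2m$. Therefore $\chi_{dom}(K_m^5)=3m-4$, not $3m-3$. For $m=2$ the formula fails in the other direction: $K_2^5=P_6$ has $\chi_{dom}=4$ by the path formula quoted in the paper, not $3$.

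This has three consequences for your plan:
\begin{itemize}
\item Your target inequality $p_{j-1}+p_j\le m-1$ is false; the example has $p_1+p_2=4=m$.
\item Your proposed hand check $n=5$, $m=3$ would not reveal anything. There $11$ vertices admit $5$ pairs, and the formula's value $6$ happens to be correct; you need $m\ge 4$.
\item The paper's induction does not catch this either. It only extends one particular colouring of $K_m^{n-1}$ and asserts, without any lower-bound argument, that the new vertices must receive new colours.
\end{itemize}

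\textbf{For even $n$, your lower bound is right but your construction is not a partition.} The cut vertex $c_{2i}$ lies both in $B_{2i}\setminus\{c_{2i-1}\}$ and in $B_{2i+1}\setminus\{c_{2i+1}\}$. Pairing these sets bijectively for every $i$ at once therefore uses each $c_{2i}$ twice. Your own count confirms this: $n(m-1)$ covered vertices plus $n/2$ leftover odd cut vertices equals $n(m-1)+1$ only when $n=2$.

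A correct version alternates with period four:
\begin{itemize}
\item at $c_j$ with $j\equiv 1\pmod 4$, pair all of $B_j\setminus\{c_j\}$ with all of $B_{j+1}\setminus\{c_j\}$;
\item at $c_j$ with $j\equiv 3\pmod 4$, pair the interior vertices of $B_j$ with interior vertices of $B_{j+1}$;
\item at $c_j$ with $j\equiv 2\pmod 4$, use the single pair $\{c_{j-1},c_{j+1}\}$.
\end{itemize}
Every vertex but one is covered exactly once, giving $n(m-1)/2+1$ colours. For example, with $n=4$ the one uncovered vertex is an interior vertex of $B_4$. So the even half of the theorem is provable along your lines, but the odd half is not, as stated.
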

\begin{proof}
Note that the graph $K_m^n$ has $m-1$ more vertices than $K_m^{n-1}$. It is easy to see that, due to the domination constraint, any dominated coloring of $K_m^n$ allows the same color to appear in at most two blocks, and these blocks must be consecutive.
For $n=1$, $K_m^n$ is a complete graph, and for $n=2$, it is easy to see from the theorem 2.2 that $K_m^2$ is chromatic dominated graph. Therefore, $\chi_{dom}(K_m^1)=\chi_{dom}(K_m^2)=m$.

\noindent For $n>2$, we prove by using mathematical induction on the number of blocks $n$.\\

\noindent \textbf{Claim:}\[   \chi_{dom}(K_m^n)=\begin{cases}
            \chi_{dom}(K_m^{n-1})+m-2, & if\ n\ is\ odd \\
            \chi_{dom}(K_m^{n-1})+1, & if \ n\ is\ even.
        \end{cases} \]
        
For $n=3$, consider a dominated coloring of $K_m^2$ using $m$ colors. In this coloring, $m-1$ vertices of the second block use colors from the first block. The color of the common vertex between the first and second blocks can also be used for one of the $m-1$ vertices in the third block, and the remaining $m-2$ vertices in the third block must  receive $m-2$ new colors. Therefore, an optimum dominated coloring of $K_m^3$ requires exactly $\chi_{\text{dom}}(K_m^2) + 1$ colors. \\

For $n=4$, the $m-2$ new colors introduced in the third block during the dominated coloring of $K_m^3$ can also be used in the fourth block for the dominated coloring of $K_m^4$. This is possible as long as the shared vertex between the third and fourth blocks should be given the same color as the common vertex of the first and second blocks. This coloring leaves exactly one vertex uncolored, which must be assigned a new color. Therefore,
$\chi_{dom}(K_m^4) = \chi_{dom}(K_m^3) + 1$.
It follows that the hypothesis is valid for the base cases.

Assume that the induction hypothesis holds for all integers up to $n=k$.\\

Consider a dominated coloring of $K_m^k$.
If $k+1$ is even, then $k$ is odd. By the induction hypothesis, the $k^{\text{th}}$ block of $K_m^k$ contains $m - 2$ colors that are not shared with any of the preceding blocks. In the dominated coloring of $K_m^{k+1}$, $m - 2$ of the $m - 1$ additional vertices can be assigned the same $m - 2$ colors used in the $K_m^k$ block, while the remaining vertex must receive a new color. Therefore, $\chi_{dom}(K_m^{n+1})=\chi_{dom}(K_m^n)+1$.\\

When $k+1$ is odd, $k$ is even. By the induction hypothesis, the $n^{\text{th}}$ block contains only one color not shared with any preceding block. This color can be assigned to one vertex in the $(k+1)^{\text{th}}$ block of $K_m^{k+1}$ in its dominated coloring, while the remaining $m - 2$ vertices require new colors. Therefore,
     $\chi_{dom}(K_m^{k+1})=\chi_{dom}(K_m^k)+m-2$. Hence the claim.\\
     
From the claim, it is easy to see that \begin{center}
        $\chi_{dom}(K_m^n)=\begin{cases}
            \frac{n(m-1)}{2}+1, & if\ n\ is\ even \\
            \frac{(n+1)(m-1)}{2}, & if \ n\ is\ odd.
        \end{cases}$
    \end{center}
\end{proof}  

Let $\Gamma$ be a block graph decomposed into subgraphs $S_i$, $D_j$ and $L_k$ as follows: Each $S_i$ is a maximal subgraph with structure similar to the graph in corollary\ref{coro} . The $D_j$ are single blocks located between two $S_i$'s, while $L_k$'s are end blocks-blocks attached to only one $S_i$.
\begin{theorem}
 \[\chi_{dom}(\Gamma)\leq min\{\sum_i\omega(S_i)+\sum_j(\omega(D_j)-2)+\sum_k(\omega(L_k)-1)\}, \]
    where the minimum is taken over all possible decompositions of $\Gamma$.
\end{theorem}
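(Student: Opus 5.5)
Since the statement is an upper bound minimised over decompositions, it suffices to fix one admissible decomposition of $\Gamma$ into the pieces $S_i$, $D_j$, $L_k$. For that decomposition I will build an explicit dominated coloring using at most $\sum_i\omega(S_i)+\sum_j(\omega(D_j)-2)+\sum_k(\omega(L_k)-1)$ colors; taking the minimum over decompositions then gives the claim. The construction is piecewise. Each $S_i$ gets its own palette of $\omega(S_i)$ colors, fresh and disjoint from the palettes of other pieces. Each $D_j$ and $L_k$ receives fresh colors only on its vertices that do not already lie in some $S_i$.

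\textbf{Coloring the $S_i$.} Each $S_i$ is a maximal subgraph whose blocks all share a common vertex $u_i$. By Corollary~\ref{coro} it can be colored with $\omega(S_i)$ colors so that $u_i$ dominates every color class restricted to $S_i$. Concretely, give $u_i$ one color and color each block through $u_i$ using the remaining $\omega(S_i)-1$ colors; this is proper because distinct blocks meet only in $u_i$. Every class other than that of $u_i$ lies in $N(u_i)$. The class of $u_i$ is $\{u_i\}$ within $S_i$, and it is dominated by any neighbour of $u_i$. Since the palettes of different $S_i$ are disjoint, no color class spans two $S_i$'s, so domination within each piece is enough.

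\textbf{Connecting blocks and end blocks.} A block $D_j$ lies between two $S_i$'s, so two of its vertices $a\in S_p$ and $b\in S_q$ are already colored. I give its remaining $\omega(D_j)-2$ vertices new colors. Each such class is a singleton, dominated by $a$, and the coloring stays proper inside $D_j$ because all colors on $D_j$ are distinct. An end block $L_k$ meets exactly one $S_i$ in a cut vertex, which is already colored, so it needs $\omega(L_k)-1$ new singleton colors, dominated by that cut vertex. Properness holds globally because every edge lies in a single block, and each block sees pairwise distinct colors: either it is inside some $S_i$, or its non-shared vertices carry fresh colors and its two shared vertices $a,b$ carry colors from different palettes. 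Summing the palette sizes gives the stated count.

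\textbf{Main obstacle.} The delicate point is checking that adding $D_j$ and $L_k$ vertices does not break domination of the old classes, and pinning down what a decomposition is allowed to be. Adding vertices never removes a dominator, so old classes stay dominated. The real issue is the degenerate case where $S_i$ is a single vertex, or where $D_j$ has $\omega(D_j)=2$, in which case the formula adds $0$. I would handle this by insisting that every vertex lies in some $S_i$ or gets a fresh color, that is, by reading the decomposition as covering $V(\Gamma)$ with the $S_i$ as hubs. Then every color class is either contained in the closed structure of one $S_i$ around $u_i$ or is a fresh singleton with a neighbour, and domination follows as above.
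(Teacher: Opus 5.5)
The paper states this theorem with no proof; the text moves straight on to $k$-trees, so there is no argument of the authors to compare yours with. Your construction gives each star its own palette of $\omega(S_i)$ colors, with the center $u_i$ dominating every class except its own singleton, and then gives fresh singleton colors to the unshared vertices of each $D_j$ and $L_k$. This is the natural proof of the bound and it works. However, one step relies on an assumption you never state.

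The instruction that each $S_i$ receives a palette disjoint from the others presupposes that the $S_i$ are pairwise vertex-disjoint. The paper's definition does not guarantee this. Take the chain $K_m^4$ with blocks $B_1,\dots,B_4$ and cut vertices $v_1,v_2,v_3$. The stars $B_1\cup B_2$ (center $v_1$) and $B_3\cup B_4$ (center $v_3$) are both maximal, together cover every block, and share $v_2$. As written, your construction gives $v_2$ two colors.

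The repair is short:
\begin{itemize}
\item A maximal star is the full star at its center, so a shared vertex $v\in S_p\cap S_q$ cannot be a center. If $v=u_p$, the block of $S_q$ joining $v$ to $u_q$ would also lie in $S_p$.
\item Color $v$ from the palette of $S_p$. Its class stays inside $N(u_p)$. In the block of $S_q$ containing $v$, one color of $S_q$'s palette is simply left unused. Properness, domination and the count are unaffected.
\item A vertex outside $\bigcup_i S_i$ that lies in several pieces should receive a single fresh color. This only lowers the count.
\end{itemize}

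Your properness claims also rest on a fact you should state explicitly. A block not contained in $S_p$ meets $S_p$ in at most one vertex, and there is no edge between two distinct blocks through $u_p$. Either situation would create a triangle through $u_p$, which must lie inside a single block. This is what guarantees that distinct precolored vertices of $D_j$ come from distinct palettes, and hence get distinct colors.

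The degenerate cases in your last paragraph are harmless. The case $\omega(D_j)=2$ causes no trouble, since $a$ and $b$ already carry colors from different palettes. A single vertex is never a maximal star.
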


\subsection{Dominated Coloring of \texorpdfstring{$k$}{k}-trees}

A $k$-tree is a graph that can be reduced to the $k$-complete graph by a sequence of removals of a degree $k$ vertex with completely connected neighbors. A $k$-tree with $k+1$ vertices is called
base clique. 

Merouan et al. \cite{merouane2015dominated} proved that $
\chi_{dom}(P_n) = 
\begin{cases} 
\left\lfloor\frac{n}{2}\right\rfloor & \text{if } n \equiv 0 \pmod 4 \\ 

\left\lfloor\frac{n}{2}\right\rfloor + 1, & \text{otherwise.}
\end{cases}$, and therefore   $\chi_{dom}(P_n)\neq \chi(P_n)$. Hence $P_n$ is not chromatic dominated. We know that any tree is a 1-tree, and hence, in general, any $k$-tree is not chromatic dominated.
\begin{theorem}\cite{merouane2015dominated}\label{k-tree}
    Let $G$ be a graph with maximum degree $\Delta$ and order $n$. Then $\chi_{dom}(G)\geq \frac{n}{\Delta}$.
\end{theorem}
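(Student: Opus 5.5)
The plan is a simple counting argument that combines the definition of a dominated coloring with the pigeonhole principle. Fix an optimal dominated coloring of $G$ with color classes $V_1,\dots,V_t$, where $t=\chi_{dom}(G)$. By definition, each class $V_i$ is dominated by some vertex $x_i$, so every vertex of $V_i$ is adjacent to $x_i$. Hence $V_i\subseteq N(x_i)$, and we get the size bound
\[
|V_i|\le |N(x_i)| = \deg(x_i)\le \Delta .
\]

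Next, since the color classes partition $V(G)$, summing the size bound over all classes gives
\[
n=\sum_{i=1}^{t}|V_i|\le t\Delta .
\]
Rearranging yields $\chi_{dom}(G)=t\ge n/\Delta$. Because $\chi_{dom}(G)$ is an integer, we in fact obtain $\chi_{dom}(G)\ge\lceil n/\Delta\rceil$.

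The only point that needs care is the meaning of ``dominated by a vertex''. Under the definition in the introduction, the dominating vertex $x_i$ must be adjacent to every vertex of the class. In particular $x_i\notin V_i$, since a graph has no loops, so we really have the containment $V_i\subseteq N(x_i)$ and not merely $V_i\subseteq N[x_i]$. If one instead allowed closed neighborhoods, the bound would weaken to $|V_i|\le\Delta+1$. That weaker bound does not occur here: $V_i$ is independent, so if $x_i\in V_i$ then $V_i=\{x_i\}$, and the bound $|V_i|\le\Delta$ still holds whenever $\Delta\ge1$. We assume $G$ has no isolated vertices (otherwise $\Delta$ could be $0$), which matches the paper's standing assumption that $G$ is connected and nontrivial.
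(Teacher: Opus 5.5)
Your proof is correct and complete. The paper gives no argument of its own here; it only cites Merouane et al. Your count ($V_i\subseteq N(x_i)$ for the dominating vertex $x_i$ of each class, hence $n=\sum_i|V_i|\le\chi_{dom}(G)\,\Delta$) is the standard proof. It is equivalent to the chain $\chi_{dom}(G)\ge\gamma_t(G)\ge n/\Delta$, since the dominating vertices $x_1,\dots,x_t$ form a total dominating set.
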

The following theorems give a necessary condition for a $k-$tree to be chromatic dominated.
\begin{theorem} If $G$ is a chromatic dominated $k$-tree with $n$ vertices then $\frac{n}{k+1}\leq \Delta(G)$, where $\Delta(G)$ is the maximum degree of $G$.
\end{theorem}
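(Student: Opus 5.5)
The plan is to combine the lower bound of Theorem~\ref{k-tree} with the value of $\chi(G)$ for a $k$-tree. First I will record that every $k$-tree $G$ (with at least $k+1$ vertices) is perfect and has clique number $k+1$: the base clique is $K_{k+1}$, and each added vertex is adjacent to exactly a $k$-clique, so no clique larger than $k+1$ ever arises. A greedy coloring along the construction order gives each new vertex the one color missing from its $k$ neighbours. Hence $\chi(G)=\omega(G)=k+1$.

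Next I will use the hypothesis that $G$ is chromatic dominated, i.e.\ $\chi_{dom}(G)=\chi(G)=k+1$. Theorem~\ref{k-tree} gives $\chi_{dom}(G)\geq \frac{n}{\Delta(G)}$, so $k+1\geq \frac{n}{\Delta(G)}$, that is, $n\leq (k+1)\Delta(G)$. This yields $\frac{n}{\Delta(G)}\leq k+1$, which is close to, but not the same as, the displayed inequality $\frac{n}{k+1}\leq \Delta(G)$.

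The step I expect to need care is matching this to the exact form of the statement. The two inequalities are equivalent because both quantities are positive: dividing $n\le (k+1)\Delta(G)$ by $k+1$ gives precisely $\frac{n}{k+1}\leq\Delta(G)$. I will also remark on the trivial case $n=k+1$, where $G=K_{k+1}$ and $\Delta=k$. There the bound still reads $1\le k$, which holds for $k\geq 1$, so no exceptional case arises.

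A more self-contained route also works. In a dominated coloring with $k+1$ classes, each class lies in $N(v)$ for some vertex $v$, so each class has at most $\Delta(G)$ vertices. Summing over the $k+1$ classes gives $n\leq (k+1)\Delta(G)$ directly, with no need to invoke Theorem~\ref{k-tree}.
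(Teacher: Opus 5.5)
Your proposal is correct and follows the paper's proof: the paper notes that being chromatic dominated forces $\chi_{dom}(G)=k+1$, then applies the cited bound $\chi_{dom}(G)\geq n/\Delta(G)$ to get $n/(k+1)\leq\Delta(G)$. Your justification that $\chi(G)=k+1$ for a $k$-tree, which the paper leaves implicit, is sound, and so is your counting argument (each color class lies in some open neighbourhood and so has at most $\Delta(G)$ vertices); the latter is just a proof of the cited bound.
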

\begin{proof}
    If $G$ is chromatic dominated, $\chi_{dom}(G)=k+1$. It follows from theorem \ref{k-tree} that $\frac{n}{k+1}\leq \Delta(G)$.
\end{proof}

The converse of the above theorem is not true. For example, consider a graph $G$ shown in Fig. \ref{k-tree_1}. The graph $G$ is not chromatic dominated since $\chi(G)=2\neq \chi_{dom}(G)=3$, but the condition is satisfied.
\begin{figure}[H]
    \centering
    \includegraphics[width=0.3\linewidth]{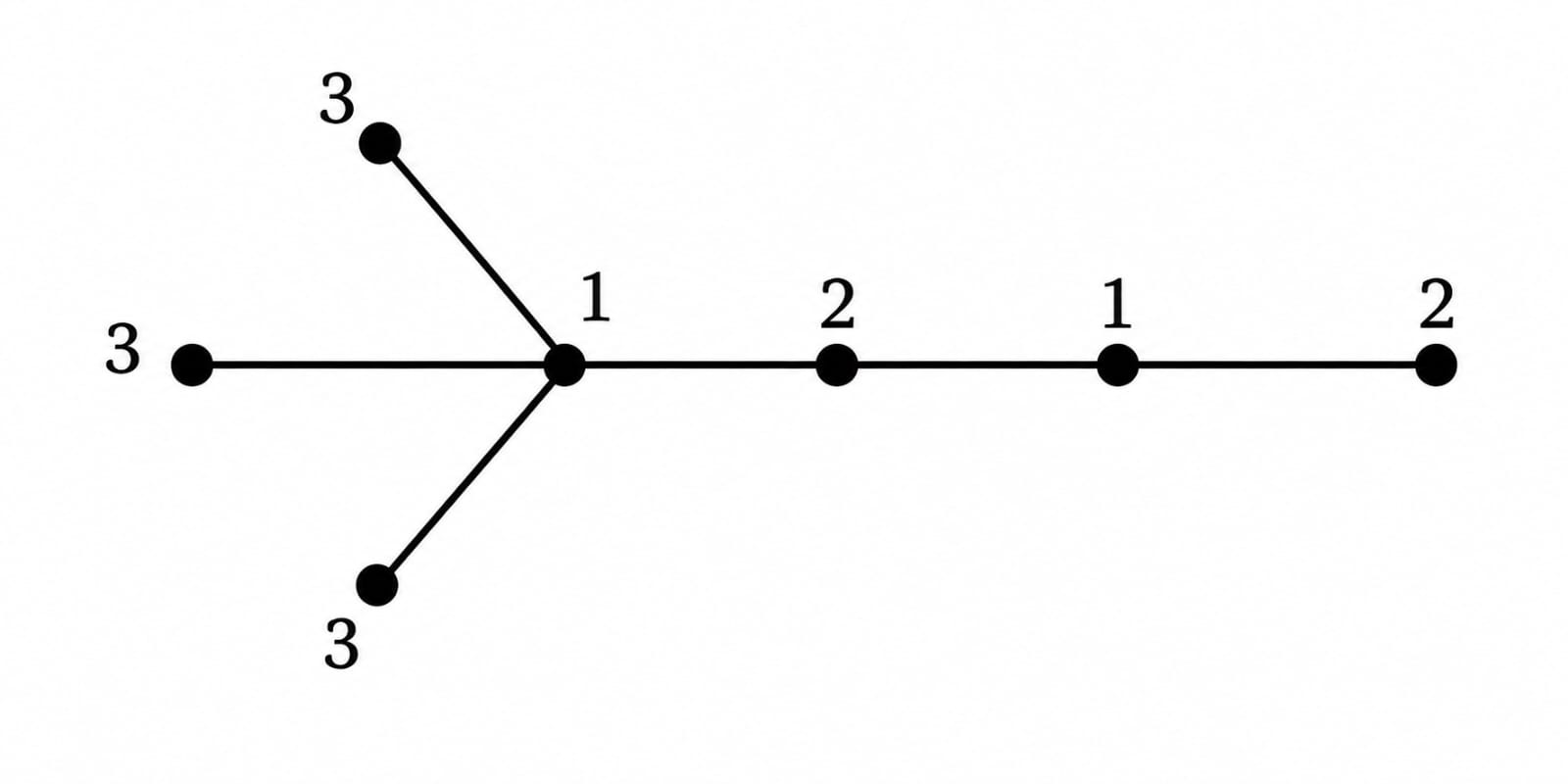}
    \caption{Example for the converse of the Theorem 2.6.}
    \label{k-tree_1}
\end{figure}
\begin{theorem}\cite{benkaci2025dominated}
    Let $G$ be a connected graph of order $n$ and with diameter $diam(G)$. Then $\frac{1}{2}[diam(G)+1]\leq\chi_{dom}(G)\leq n-\frac{1}{2}[diam(G)-1]$.
\end{theorem}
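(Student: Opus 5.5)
Both bounds come from a diametral shortest path and the fact that a dominating vertex pins its color class inside a ball of radius one. Let $P=v_0v_1\cdots v_d$ be a shortest path with $d=diam(G)$. Fix a dominated coloring of $G$ with color classes $V_1,\dots,V_t$, and let $V_i$ be dominated by $x_i$. Every vertex of $V_i$ is then within distance $1$ of $x_i$, so any two vertices of $V_i$ are at distance at most $2$.

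\emph{Lower bound.} I intend to show that no color class contains more than two vertices of $P$. Because $P$ is a geodesic, $d(v_a,v_b)=|a-b|$. Suppose $v_a,v_b,v_c\in V_i$ with $a<b<c$. Then $c-a\le 2$, which forces $(a,b,c)=(a,a+1,a+2)$. But $v_a$ and $v_{a+1}$ are adjacent, and a proper coloring cannot put them in the same class, a contradiction. Hence each class meets $P$ in at most two vertices. The $d+1$ vertices of $P$ therefore need at least $\lceil (d+1)/2\rceil\ge \frac12(d+1)$ classes, which gives $\chi_{dom}(G)\ge \frac12[diam(G)+1]$.

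\emph{Upper bound.} Here I will construct an explicit dominated coloring. Pair up path vertices at distance exactly $2$ that share a common neighbour on $P$. For $d\ge 2$, group $v_1,\dots,v_d$ into consecutive blocks of four, $\{v_{4s+1},v_{4s+2},v_{4s+3},v_{4s+4}\}$, and within each block use the pairs $\{v_{4s+1},v_{4s+3}\}$ and $\{v_{4s+2},v_{4s+4}\}$. These pairs are independent sets because the path is geodesic, so its vertices at distance $2$ are nonadjacent. The pairs are dominated by $v_{4s+2}$ and $v_{4s+3}$ respectively. This is the same pattern as the optimal coloring of $P_n$ quoted from \cite{merouane2015dominated}. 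A leftover tail of fewer than four vertices is handled by at most one further pair of this form; any vertex that still cannot be paired becomes a singleton.

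Every vertex of $G$ outside the chosen pairs, including $v_0$ and any unpaired tail vertices, receives its own singleton class. A singleton $\{w\}$ is dominated by any neighbour of $w$, which exists since $G$ is connected (the trivial graph is checked separately). The resulting coloring is proper and every class is dominated. It uses $n-p$ colors, where $p$ is the number of pairs. Counting the pairs obtainable among $v_1,\dots,v_d$ gives roughly $p\ge \lfloor d/2\rfloor$, essentially $d/2$ apart from the residue of $d$ modulo $4$. Since $\lfloor d/2\rfloor\ge \frac{d-1}{2}$, this yields $\chi_{dom}(G)\le n-\frac12[diam(G)-1]$.

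\emph{Main obstacle.} The delicate step is the residue count in the upper bound when $d\equiv 1,2,3 \pmod 4$. I must check that the leftover tail still contributes enough pairs to reach $p\ge \frac{d-1}{2}$. If the tail loses one pair, I will first try using $v_0$ in a pair. If that is not possible, I will fall back on pairing a tail vertex $v_j$ with a vertex off the path that shares a neighbour with it and is nonadjacent to it. Failing that, I will observe that $n$ exceeds $d+1$ enough for the bound to hold anyway.
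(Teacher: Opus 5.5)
The paper does not prove this statement; it quotes it from \cite{benkaci2025dominated}, so your argument has to stand on its own. Your lower bound is correct and complete. A geodesic is an induced path, and every class lies in the open neighbourhood of its dominator, so it has diameter at most $2$. Hence each class meets $P$ in at most two vertices.

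The gap is in the upper bound, exactly at the step you flagged, and none of your three fallbacks closes it as stated. Your coloring uses $n-p$ colors, so you need $p\ge\frac{d-1}{2}$. This condition does not involve $n$, so the third fallback (``$n$ exceeds $d+1$ enough'') can never help: each extra vertex adds one singleton color and raises the bound by exactly one. With blocks of four taken inside $v_1,\dots,v_d$, the case $d=4q+2$ fails. The $q$ full blocks give $2q$ pairs and the tail $\{v_{4q+1},v_{4q+2}\}$ is an edge, so you get $n-2q$ colors, while the bound demands at most $n-2q-1$. No rearrangement inside $\{v_1,\dots,v_d\}$ can fix this. There the odd-indexed and even-indexed vertices form two chains of $2q+1$ vertices under the relation ``distance $2$'', so at most $2q$ disjoint pairs of path vertices exist. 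Your estimate $p\ge\lfloor d/2\rfloor=2q+1$ is therefore false in this residue class. The first fallback fails for $d\ge 6$, because the only partner of $v_0$ on $P$ is $v_2$, which is already used in $\{v_2,v_4\}$. The second fallback needs a suitable off-path vertex, which need not exist. For example, for $G=P_7$ ($n=7$, $d=6$) your construction gives $5$ colors against the bound $4.5$.

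The repair is simple: run the blocks over all $d+1$ vertices $v_0,\dots,v_d$. Use the pairs $\{v_{4s},v_{4s+2}\}$ and $\{v_{4s+1},v_{4s+3}\}$, plus $\{v_{d-2},v_d\}$ if the tail has three vertices. Writing $d+1=4q'+s$ with $0\le s\le 3$, this gives $p=2q'$ (or $2q'+1$ when $s=3$), which is at least $\frac{d-1}{2}$ in every case. A cleaner route avoids the residue analysis entirely. Give the induced path $P\cong P_{d+1}$ an optimal dominated coloring, whose classes stay independent and dominated in $G$, and give every other vertex its own color. Theorem~\ref{uni2} gives $\chi_{dom}(P_{d+1})\le\lfloor\frac{d+1}{2}\rfloor+1$. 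Hence $\chi_{dom}(G)\le n-(d+1)+\lfloor\frac{d+1}{2}\rfloor+1=n-\lceil\frac{d-1}{2}\rceil\le n-\frac{d-1}{2}$.
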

\begin{theorem}
    If $G$ is a chromatic dominated $k$-tree with order $n$, then the diameter of $G$  $diam(G)\leq min\{2k+1,2(n-k)+1\}$.
\end{theorem}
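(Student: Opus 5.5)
The plan is to combine the diameter bounds of the preceding theorem (from \cite{benkaci2025dominated}) with the fact that a $k$-tree has chromatic number $k+1$. First I will record that a $k$-tree $G$ with $n\geq k+1$ vertices satisfies $\chi(G)=\omega(G)=k+1$. This holds because $G$ contains the base clique $K_{k+1}$. Moreover, each vertex added in the construction sees a $k$-clique, so it can receive the one color missing from that clique. Hence, if $G$ is chromatic dominated, we get $\chi_{dom}(G)=k+1$.

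Next I will apply the two inequalities $\frac{1}{2}[diam(G)+1]\leq\chi_{dom}(G)\leq n-\frac{1}{2}[diam(G)-1]$ with $\chi_{dom}(G)=k+1$.

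The lower-bound inequality gives $\frac{1}{2}(diam(G)+1)\leq k+1$. This yields $diam(G)\leq 2k+1$.

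The upper-bound inequality gives $k+1\leq n-\frac{1}{2}(diam(G)-1)$. Rearranging, $diam(G)-1\leq 2(n-k-1)$, that is, $diam(G)\leq 2(n-k)-1$. This is stronger than, and therefore implies, the stated $diam(G)\leq 2(n-k)+1$.

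Taking both bounds together gives $diam(G)\leq \min\{2k+1,2(n-k)+1\}$.

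The only delicate points are bookkeeping. One is the exact form of the cited bound, in particular whether the brackets denote a floor; if they do, the same rearrangement works with at most a $\pm1$ slack, which the stated bound absorbs. The other is the degenerate case $n=k$, where $G=K_k$ and the diameter is trivially at most $1$. I expect no real obstacle beyond confirming $\chi(G)=k+1$ for $k$-trees.
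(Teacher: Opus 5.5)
Your proposal is correct and is essentially the paper's proof: substitute $\chi_{dom}(G)=\chi(G)=k+1$ into both inequalities of the cited diameter theorem. Your rearrangement of the upper inequality to $diam(G)\leq 2(n-k)-1$ is right (the paper records only the weaker $2(n-k)+1$), and so is your separate treatment of $n=k$, where $G=K_k$ has chromatic number $k$ rather than $k+1$.

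One small correction to your hedge: a floor in the lower inequality would only give $diam(G)\leq 2k+2$, which the stated bound does not absorb. This is moot, because the bound holds exactly as written: a color class dominated by a single vertex meets a diametral geodesic in at most two vertices.
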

\begin{proof}
    If $G$ is chromatic dominated, $\chi_{dom}(G)=k+1$. It follows from the above theorem that $diam(G)\leq 2k+1$ and $diam(G)\leq 2(n-k)+1$. Hence $diam(G)\leq min\{2k+1,2(n-k)+1\}$.
\end{proof}
The condition given in the above theorem is not sufficient. The 2-tree $G$ shown in Fig.\ref{k-tree-3} is not chromatic dominated, but  $diam(G)\leq min\{2k+1,2(n-k)+1\}$.

\begin{figure}[H]
    \centering
    \includegraphics[width=0.30\linewidth]{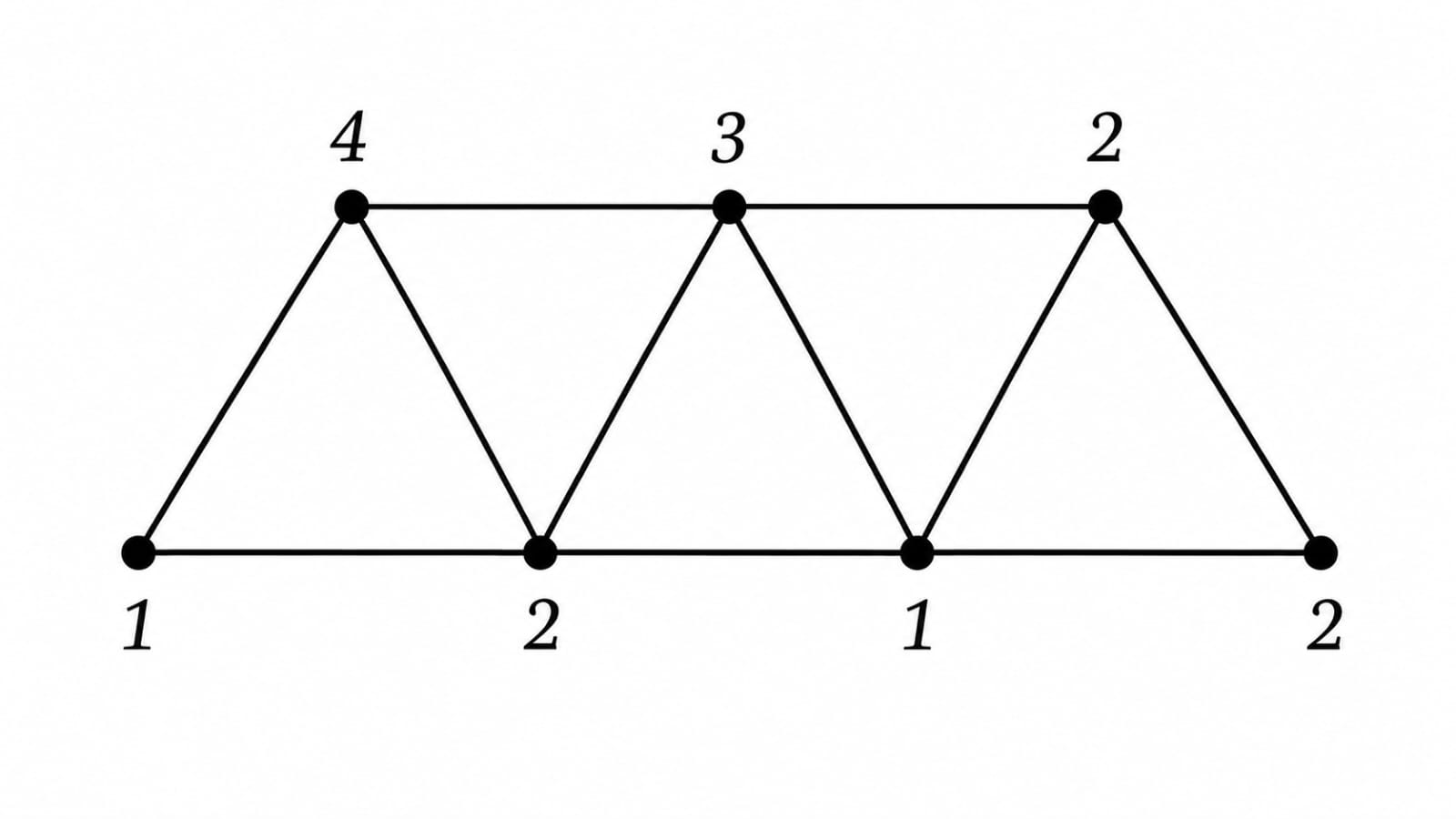}
    \caption{Example for the converse of Theorem 2.8.}
    \label{k-tree-3}
\end{figure}

The following theorem gives a sufficient condition for a $k$-tree to be chromatic dominated.
\begin{theorem} Let G be a k-tree with n vertices such that all k-cliques have a common vertex. Then, $G$ is chromatic dominated.
\end{theorem}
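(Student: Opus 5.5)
The plan is to show $\chi_{dom}(G)\le k+1$ by an explicit coloring. The reverse inequality is automatic, because every $k$-tree with at least $k+1$ vertices contains a base clique $K_{k+1}$. Hence $\chi_{dom}(G)\ge\chi(G)=\omega(G)=k+1$.

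The first step is to pin down the hypothesis. Read literally, "all $k$-cliques share a vertex" would fail in almost every $k$-tree: a $(k+1)$-clique always contains a $k$-subclique avoiding any prescribed vertex. So I will read the statement as saying that all the maximal cliques, i.e. the $(k+1)$-cliques built in the construction, contain a common vertex $v$.

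From this I derive that $v$ is universal. Every vertex of a $k$-tree lies in some $(k+1)$-clique, since each vertex is added adjacent to a $k$-clique, and the base clique covers the initial vertices. Every such clique contains $v$, so every vertex other than $v$ is adjacent to $v$. This matches the situation of Corollary \ref{coro}, where a single vertex lies in every block.

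Next, color $G$ properly with $k+1$ colors. This is possible because $k$-trees are perfect and chordal: a perfect elimination order colors them greedily with $\omega=k+1$ colors. Since $v$ is adjacent to every other vertex, $v$ is the only vertex of its color, say color $k+1$. Every other color class lies in $V(G)\setminus\{v\}\subseteq N(v)$, so it is dominated by $v$. The singleton class $\{v\}$ is dominated by any neighbor of $v$, which exists because $n\ge k+1\ge2$. Thus this coloring is a dominated coloring with $k+1$ colors, so $\chi_{dom}(G)=k+1=\chi(G)$.

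The main obstacle is the interpretation step. The hypothesis has to be read so that the common vertex lies in every maximal clique, since that is what forces $v$ to be universal, and this is the only place where care is needed. Once universality is established, the remaining argument is routine. If the authors intend a weaker hypothesis, I would still aim to reduce to the case where the common vertex dominates every color class. I would try recoloring along the construction order so that each new vertex reuses colors already present in the clique containing $v$.
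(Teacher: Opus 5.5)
Your proof is correct, but it is organized differently from the paper's.

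The paper inducts on $n$. It extends an optimal dominated coloring of $G'=G''-v$ by giving the new vertex $v$ the one color $c$ absent from the $k$-clique that $v$ is attached to. It then asserts that the enlarged class $c$ is dominated by the common vertex $u$.

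You instead first prove that the common vertex (your $v$, the paper's $u$) is universal. After that, any proper $(k+1)$-coloring is automatically dominated: the universal vertex is alone in its class and dominates every other class. This isolates a fact the paper's inductive step uses without stating it. The old dominator of class $c$ need not be adjacent to the new vertex. So the claim that $u$ dominates the enlarged class needs every earlier member of that class to be adjacent to $u$, which is exactly universality.

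What each route buys: the paper's induction produces one explicit coloring built along the construction order. Yours shows that every optimal proper coloring works, and it avoids the induction altogether.

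Your remark on the hypothesis can be sharpened from ``almost every'' to ``every''. The $k+1$ subcliques of order $k$ of the base clique have empty intersection, so the literal statement is vacuous. Your reading, that the common vertex lies in every $(k+1)$-clique, agrees with how the paper's proof actually uses the hypothesis: each newly added vertex is attached to a $k$-clique containing $u$.
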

\begin{proof}
Let $u$ be the common vertex to all cliques $k$ in $G$. We use mathematical induction on the number of vertices $n$ to prove the theorem. \\

Suppose the graph $G$ is a clique (base clique) of size $n=k+1$ then we know that $\chi_{dom}(G)=\omega(G)=\chi(G)$. \\

Let us assume that the hypothesis is true for a $k$-tree $G'$ with $n=k+l$ vertices.\\

Now, we construct a $k$-tree $G''$ with $n=k+l+1$ vertices by adding a new vertex $v$ with $G'$, and all the $k$ cliques in $G''$ having a common vertex $u$. 
Consider an optimum dominated coloring of $G'$,  and we extend it to $G''$ by retaining the same colors for all vertices of $G'$. One color is missing at $v$, say $c$, and we use $c$ to $v$. This coloring is proper, and every color class of $G''$ remains dominated by the same vertex as in $G'$, while the color class containing $v$ is dominated by $u$. Hence, this coloring is a dominated coloring. Therefore $\chi(G'')=\omega(G'')=\chi_{dom}(G'')$. 
\end{proof}
The converse of the above theorem is also not true. The graph given in Fig. \ref{k-tree_2} is a chromatic dominated 2-tree, but there is no common  vertex to all 2-cliques.
\begin{figure}[H]
    \centering
    \includegraphics[width=0.25\linewidth]{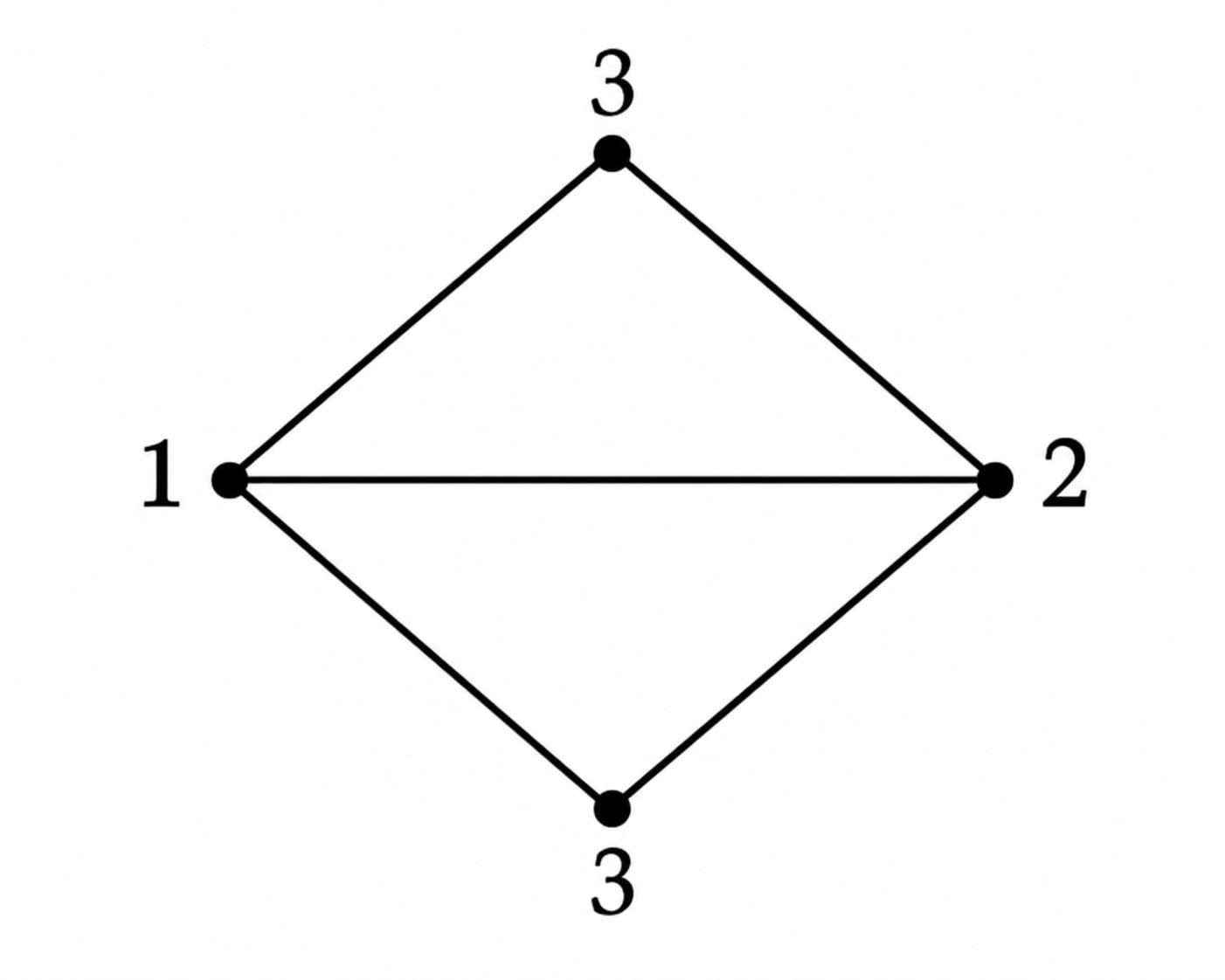}
    \caption{Example for the converse of Theorem 2.9.}
    \label{k-tree_2}
\end{figure}

From the above theorem, we observed that for any positive integer $t$, there exists a $k$-tree $G$ with $\chi_{dom}(G)=k+t.$ 

\textbf{Open Problems:}
\begin{enumerate}
    \item For any positive integer $t$, there exists a $k$-tree $G$ with $\chi_{dom}(G)=k+t.$
    \item For every $k$-tree, there exists a positive integer $l$ such that $\chi_{dom}(G)=k+l.$ \\
\end{enumerate}

These two open problems are true for 1-tree. For example, for any positive integer, we can find a $P_n$ (1-tree) such that $\chi_{dom}(P_n)=t+1.$ Similarly, for any $n$, we can find an integer $l$ such that $\chi_{dom}(P_n)=l+1.$ The two problems remain open for $k\geq 2.$

\section{Unicyclic Graphs}
Unicyclic graphs are connected graphs containing exactly one cycle. A connected graph $G$ is unicyclic if and only if the number of vertices and edges are the same. Unicyclic graphs have a number of subclasses, including sun graphs, generalized sun graphs, and $m,n$-Tadpole graphs.\\

The $(m,n)$-Tadpole graph $T_{m,n}$ is a unicyclic graph that is formed by joining an arbitrary vertex of the cycle graph $C_m$ and an end vertex of the path graph $P_n$ by an edge. Such graphs have $m+n$ vertices and $m+n$ edges. 

\begin{theorem}\cite{merouane2015dominated}\label{uni2}
    \[
\chi_{dom}(C_n) = \chi_{dom}(P_n)=
\begin{cases} 
\left\lfloor\frac{n}{2}\right\rfloor & \text{if } n \equiv 0 \pmod 4 \\ 

\left\lfloor\frac{n}{2}\right\rfloor + 1, & \text{otherwise.}
\end{cases}
\]
\end{theorem}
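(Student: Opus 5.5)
The plan is to reduce everything to a matching problem on the ``distance-two graph''. Let $G$ be $P_n$ or $C_n$ with vertices $v_1,\dots,v_n$ in order. Since $\Delta(G)\le 2$, every color class of a dominated coloring lies inside some open neighbourhood $N(v)$, so it has size at most $2$. Moreover, a class of size $2$ must be exactly a pair $\{v_{i-1},v_{i+1}\}$ of vertices at distance two, indices taken modulo $n$ for the cycle. Conversely, any such pair is independent (for $n\ge 4$ in the cycle case) and is dominated by $v_i$. Any singleton class $\{v_i\}$ is independent and dominated by a neighbour of $v_i$. Hence, for $n\ge 4$ in the cycle case, dominated colorings with $t$ classes correspond exactly to matchings of size $n-t$ in the graph $H$ on $V(G)$ whose edges join vertices at distance two in $G$. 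Consequently
\[\chi_{dom}(G)=n-\nu(H),\]
where $\nu(H)$ is the matching number of $H$. (For $C_3$ this correspondence fails, because the distance-two pairs are adjacent; I would exclude $C_3$ explicitly, since $\chi_{dom}(C_3)=3$.) This identity already gives the trivial lower bound $\lceil n/2\rceil$ from Theorem~\ref{k-tree} with $\Delta=2$.

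Next, I would determine $H$ explicitly. For $P_n$, the graph $H$ is the disjoint union of two paths, on the odd-indexed and the even-indexed vertices, of orders $\lceil n/2\rceil$ and $\lfloor n/2\rfloor$. For $C_n$ with $n$ even, $H$ is the disjoint union of two cycles, each of length $n/2$. For $n/2=2$ this ``cycle'' degenerates to a single edge, which is harmless. For $C_n$ with $n$ odd, $H$ is a single cycle of length $n$. In every case $H$ is a disjoint union of paths and cycles, and a path or cycle of order $p$ has matching number $\lfloor p/2\rfloor$. So I would compute $\nu(H)$ as the sum of $\lfloor p/2\rfloor$ over the components of $H$.

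Then I would carry out the case analysis modulo $4$. If $n\equiv 0\pmod 4$, both components have even order $n/2$, so $H$ has a perfect matching, giving $\chi_{dom}=n/2=\lfloor n/2\rfloor$. If $n\equiv 2\pmod 4$, both components have odd order $n/2$, so each leaves exactly one vertex unmatched, and $\chi_{dom}=n/2+1$. If $n$ is odd, exactly one vertex is unmatched overall: one of the two path components has odd order, or the single odd cycle has one vertex left over. This gives $\chi_{dom}=(n+1)/2=\lfloor n/2\rfloor+1$. These three values are precisely the claimed formula. The matchings used in the upper bounds are the obvious ones, pairing consecutive vertices within each parity class, so the extremal colorings come out explicitly.

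The main obstacle is the $n\equiv 2\pmod 4$ lower bound, which says the count $n/2$ is not attained. The reformulation handles this cleanly, because it amounts to a parity obstruction: each component of $H$ has odd order. The step that needs the most care is the exact correspondence between dominated colorings and matchings of $H$. One must check that every class of size $2$ is forced to be a distance-two pair, and treat the boundary cases: the endpoints of the path, where $N(v_1)=\{v_2\}$, and the small cycles $C_3$ and $C_4$.
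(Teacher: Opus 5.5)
Your proof is correct. The paper itself gives no proof of this statement; it only quotes it from Merouane et al., so your argument necessarily takes a different route.

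The route the paper points to in Section 3 goes through total domination. One first shows $\chi_{dom}(G)=\gamma_t(G)$ for triangle-free $G$. In one direction, the dominators of the colour classes form a total dominating set. In the other, assigning each vertex to a neighbour in a minimum total dominating set puts every class inside an open neighbourhood, which is independent when there are no triangles. One then imports the known value $\gamma_t(P_n)=\gamma_t(C_n)=\lfloor n/2\rfloor+\lceil n/4\rceil-\lfloor n/4\rfloor$. That route covers every triangle-free graph, but the real computation is pushed into the total-domination literature.

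Your identity $\chi_{dom}(G)=n-\nu(H)$, with $H$ the distance-two graph, relies essentially on $\Delta\le 2$, so it does not generalise. For paths and cycles, however, it is fully self-contained and gives the optimal colourings explicitly. It also turns the hard lower bound for $n\equiv 2\pmod 4$ into a one-line parity fact: each of the two components of $H$ has odd order.

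Your exclusion of $C_3$ is necessary, not a technicality. As printed, the statement is false for $C_3$: $\chi_{dom}(C_3)\ge\chi(C_3)=3$, whereas the formula gives $2$. This is exactly the case where the distance-two pairs are edges, equivalently where triangle-freeness fails.

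Similarly, $P_1$ has no dominated colouring at all, since its only vertex cannot be dominated. Your singleton step (``dominated by a neighbour of $v_i$'') therefore tacitly assumes $n\ge 2$ for paths, and you should state that hypothesis explicitly.
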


The total domination number of $(m,n)$-tadpole graphs was determined by Michael A. Henning\cite{henning2000graphs}. Merouane et.al\cite{merouane2015dominated} showed that $\chi_{dom}(G)=\gamma_t(G)$ for any triangle-free graph $G$. Since $(m,n)$-tadpole graphs are triangle-free for $m>3$, their dominated chromatic number can be determined directly from the above result. We provide an alternative proof of this result in the following theorem. 
\begin{theorem} 
\[
\chi_{dom}(T_{m,n}) = 
\begin{cases} 
\left\lfloor\frac{n}{2}\right\rfloor + \left\lfloor\frac{m}{2}\right\rfloor, & \text{if } n \equiv 0,1 \pmod 4 \text{ and } m \equiv 0 \pmod 4 \\ 
& \text{or if } n \equiv 2 \pmod 4 \text{ and } m \not\equiv 1 \pmod 4 \\[1.5ex]
\left\lfloor\frac{n}{2}\right\rfloor + \left\lfloor\frac{m}{2}\right\rfloor + 1, & \text{otherwise.}
\end{cases}
\]
\end{theorem}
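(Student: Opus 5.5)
The plan is to use the result of Merouane et al.\ quoted just before the statement: $\chi_{dom}(G)=\gamma_t(G)$ for every triangle-free graph. For $m>3$ the tadpole $T_{m,n}$ is triangle-free, so the theorem reduces to computing $\gamma_t(T_{m,n})$. The case $m=3$ is small enough to check separately by a direct dominated coloring argument.

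Fix notation. Let the cycle be $v=c_1,c_2,\dots,c_m$. Let the path be $u_1,\dots,u_n$, with $u_1$ adjacent to $v$ and $u_n$ the leaf. We need the standard values of the total domination number on paths and cycles, which agree with Theorem~\ref{uni2} through the equality above:
\[\gamma_t(P_n)=\left\lfloor\tfrac n2\right\rfloor+\left\lceil\tfrac n4\right\rceil-\left\lfloor\tfrac n4\right\rfloor,\qquad \gamma_t(C_m)=\gamma_t(P_m).\]

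\textbf{Lower bound.} Let $D$ be a minimum total dominating set. Since $u_n$ is a leaf, $u_{n-1}\in D$. The usual leaf-stripping argument then shows that every block of four consecutive path vertices, counted from the leaf, forces two vertices of $D$ among $u_1,\dots,u_n$ and $v$. This gives at least $\lfloor n/2\rfloor$ vertices of $D$ among $u_1,\dots,u_n$, and one more when $n\equiv 3\pmod 4$, or when $n\equiv 1,2\pmod 4$ and the path's last block cannot borrow $v$. For the cycle, remove the edge-pairs already used. Then $D\cap V(C_m)$ must totally dominate $c_2,\dots,c_m$, and also $v$ unless $u_1\in D$. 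This gives at least $\gamma_t(P_{m-1})$, or $\gamma_t(P_{m-1})$ plus a correction term, vertices on the cycle. Adding the two counts, and checking case by case whether $v\in D$ and whether $u_1\in D$, should give $\lfloor n/2\rfloor+\lfloor m/2\rfloor$, plus one exactly in the ``otherwise'' residues.

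\textbf{Upper bound.} Build explicit total dominating sets. On the path, starting from the leaf, take the pairs $\{u_{n-1},u_{n-2}\},\{u_{n-5},u_{n-6}\},\dots$. On the cycle, take adjacent pairs $\{c_{4i+2},c_{4i+3}\}$ with the phase chosen so that $v$ or $c_2$ is included whenever the path's leftover vertices ($u_1$, possibly $u_2$) need domination from $v$ or can supply domination to $v$. For each of the sixteen residue pairs $(n \bmod 4, m \bmod 4)$ I would write down the set and count it, verifying that it meets the lower bound.

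\textbf{Main obstacle.} The difficulty is the boundary interaction at $v$. One must decide precisely when a single extra vertex at the junction serves both the path remainder and the cycle remainder; this is exactly what separates the two cases of the formula. I would first tabulate small instances, $m\in\{4,\dots,7\}$ and $n\in\{1,\dots,4\}$, by hand, both to confirm the residue conditions in the statement and to guide the choice of phase in the constructions. The lower bound at the junction I would handle by a case split on $|D\cap\{u_1,v,c_2,c_m\}|$.
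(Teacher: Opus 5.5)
Your reduction to total domination is valid for $m\ge 4$, and it is a different route from the paper's: the paper glues optimal dominated colorings of $C_m$ and $P_n$, recolors near the bridge, and never proves a matching lower bound. Two steps of your plan fail, however.

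First, the case $m=3$ cannot be ``checked separately'', because the formula is false there. $T_{3,1}$ and $T_{3,2}$ contain a triangle, so $\chi_{dom}\ge\chi=3$, whereas the formula gives $0+1+1=2$ and $1+1=2$. Similarly, $T_{3,3}$ needs $4$ colors while the formula gives $3$: the color classes of $c_1,c_2,c_3,u_3$ must lie in $\{c_1,u_2\},\{c_2,u_1\},\{c_3,u_1\},\{u_1,u_3\}$ respectively, so they are four distinct classes. The statement has to be restricted to $m\ge 4$. The paper tacitly needs this restriction too, since it plugs in the cycle formula of Theorem~\ref{uni2}, which already fails for $C_3=K_3$.

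Second, for $m\ge 4$ the lower bound, which is the whole content of the theorem, is left at ``should give''. The one concrete inequality you state is also wrong. The number of cycle vertices of $D$ needed to dominate $c_2,\dots,c_m$ is not bounded below by $\gamma_t(P_{m-1})$, because $v=c_1$ may lie in $D$ and dominate $c_2$ and $c_m$ at once. The true minimum is $\lfloor m/2\rfloor$, which equals $\gamma_t(P_{m-1})-1$ when $m\equiv 3\pmod 4$. For example, in $T_{7,3}$ the set $\{u_1,u_2,v,c_4,c_5\}$ is a total dominating set of size $5$, the formula value, while your path-plus-cycle count forces at least $2+\gamma_t(P_6)=6$. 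Splitting $D$ additively into a path part and a cycle part breaks down precisely because $v$ (and $u_1$) can serve both sides.

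A clean repair is the exact reduction $\gamma_t(T_{m,n})=\gamma_t(T_{m,n-4})+2$ for $n\ge 4$, with $T_{m,0}:=C_m$.
\begin{itemize}
\item For $\le$: add $u_{n-2},u_{n-1}$ to a total dominating set of $T_{m,n-4}$.
\item For $\ge$: swap $u_n$ for $u_{n-2}$ if needed. If $u_{n-3}\in D$, swap it for a neighbor, inside $T_{m,n-4}$, of its other neighbor ($u_{n-4}$ or $v$).
\end{itemize}
This leaves three base cases, each settled by forced vertices.
\begin{itemize}
\item $\gamma_t(T_{m,1})=\gamma_t(C_m)$: $v$ is forced, $u_1$ can be swapped for $c_2$, and $C_m$ is vertex-transitive.
\item $\gamma_t(T_{m,2})=\lfloor m/2\rfloor+1$, plus one more exactly when $m\equiv 1\pmod 4$: $u_1$ is forced, you may assume $v\in D$, and then you cover $c_3,\dots,c_{m-1}$ one parity class at a time.
\item $\gamma_t(T_{m,3})=\lfloor m/2\rfloor+2$: $u_2$ is forced, you may assume $u_1\in D$, and then you cover $c_2,\dots,c_m$.
\end{itemize}
Writing $n=4q+r$, these three values reproduce the stated formula in every residue class.
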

\begin{proof}
    Let $T_{m,n}$ be the tadpole graph obtained by joining the cycle $C_m$ and the path $P_n$ by a bridge. Let $V(C_m)=\{x_1,x_2,.\ .\ .\ , x_m\}$ and $V(P_n)=\{y_1,y_2\ .\ .\ .\ ,y_n\}$. The vertices of $T_{m,n}$, $V(T_{m,n})=V(C_m)\cup V(P_n)$ and   $E(G)=E(C_m)\cup E(P_n)\cup \{x_ky_n\}$. Fig. \ref{tadpole} shows an example of $T_{m,n}$.
\begin{figure}[H]
    \centering
    \includegraphics[width=0.5\linewidth]{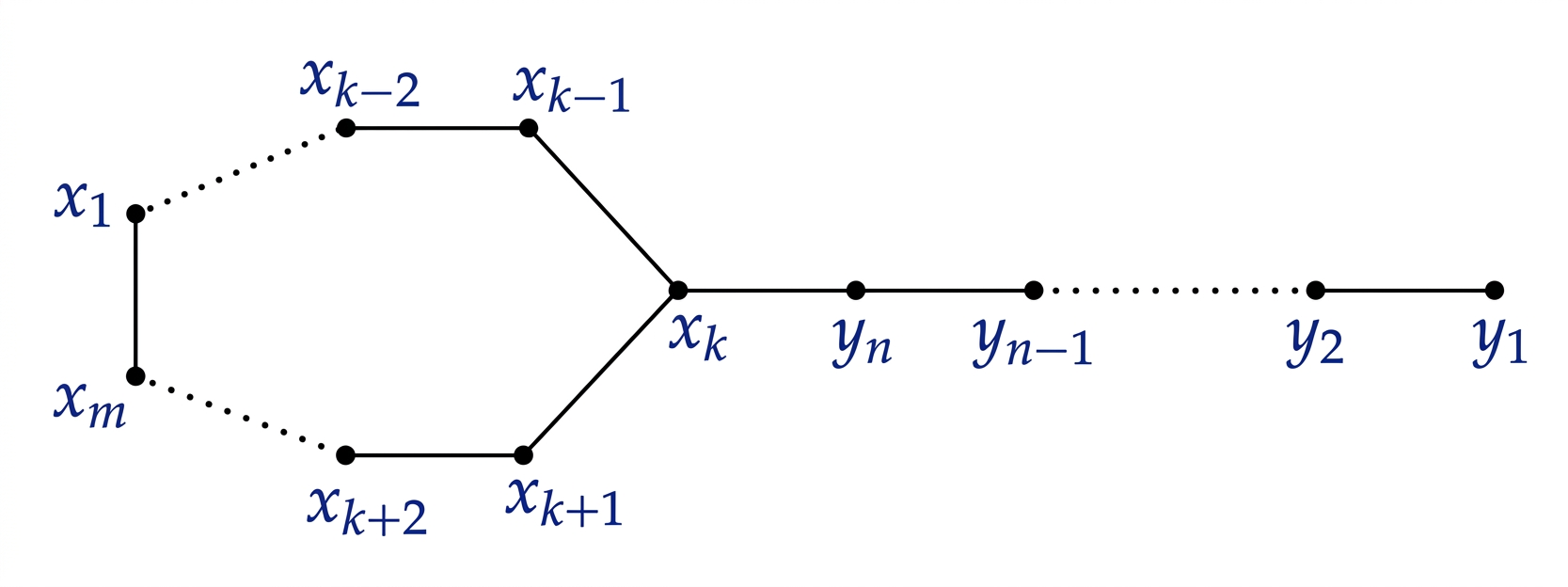}
    \caption{$T_{m,n}.$}
    \label{tadpole}
\end{figure}

Let $f_c:V(C_m)\to \{a_1,a_2,a_3,.\ . \ .,a_r\} $ and $f_p:V(P_n)\to\{b_1,b_2,. \ . \ . \ ,b_s\} $ be a dominated coloring of $C_m$ and $P_m$ respectively, where $r=\chi_{dom}(C_m)$ 
  and $s=\chi_{dom}(P_n)$. Define $f:V(T_{m,n})\to \{a_1,\dots, a_r,b_1,\dots,b_s\}$ such that 
 \begin{center}
     $f(v)=\begin{cases}
         f_c(v), &\text{if}\ v\in V(C_m)\\
         f_p(v), & \text{if} \ v\in V(P_n).
     \end{cases}$
 \end{center}
 It is easy to see that $f$ is a dominated coloring of $T_{m,n}$ with at most $r+s$ colors.
 
 That is, $\chi_{dom}(T_{m,n})\leq \chi_{dom}(C_m)+\chi_{dom}(P_n).$

The four vertices $x_{k-1},x_{k+1},x_k$, and $y_n$ form a $K_{1,3}$ and the vertices $x_k,y_n$ and $y_{n-1}$ form a $K_{1,2}$. We recolor the vertices $x_{k-1},x_{k+1}$ with  the color of $y_n$ and similarly we recolor the vertex $x_k$ with the color of $y_{n-1}$.  In this recoloring, we reduce the coloring of $T_{m,n}$. We consider different cases of $m$ and $n$. 

 \noindent For $n\equiv 0\pmod{4}$ and any $m$,  $n\equiv3\pmod{4},m\equiv0\pmod{4}$ we use the same coloring as in $f$. 
 For $n\equiv 1\pmod{4}$ and $m$, and $ n\equiv 2\pmod{4},m\equiv0,1\pmod{4}$, we reduce one color from the coloring $f$.
 For the remaining case, we reduce two colors from the coloring $f$.

\noindent Therefore the dominated chromatic number of $T_{m,n}$ is given by
\begin{center}
    $\chi_{dom}(G)=\begin{cases}
        \chi_{dom}(C_m)+\chi_{dom}(P_n), & \text{if}\ n\equiv 0\pmod{4}, \text{any} \ m\ \text{and } \\ &n\equiv3\pmod{4},m\equiv0\pmod{4}\\
        \chi_{dom}(C_m)+\chi_{dom}(P_n)-1, & \text{if}\ n\equiv 1\pmod{4},\text{ any}\ m\ \text{and}\\ 
        &n\equiv 2\pmod{4},m\equiv0,1\pmod{4}\\
        \chi_{dom}(C_m)+\chi_{dom}(P_n)-2, & \text{if}\ n\equiv2\pmod{4},m\equiv2,3\pmod{4}.
    \end{cases}$
\end{center}
Theorem \ref{uni2}, will give the desired results.
\end{proof}

A $m$-Pan graph is $(m,n)$-Tadpole graph when $n=1$.

\begin{corollary}
    The dominated chromatic number of $m$-Pan graph $G$ is
    \begin{center}
        $\chi_{dom}(G)=\begin{cases}
            \frac{m}{2}, & \text{if} \ m\equiv0\pmod{4}\\
            \lfloor\frac{m}{2}\rfloor+1, & \text{otherwise}.
        \end{cases}$
    \end{center}
\end{corollary}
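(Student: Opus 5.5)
The plan is to obtain the corollary by specializing the preceding theorem on $T_{m,n}$ to $n=1$, since the $m$-Pan graph is exactly $T_{m,1}$. I will then check the answer independently using the total domination number.

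\textbf{Step 1 (specialization).} Put $n=1$ in the formula for $\chi_{dom}(T_{m,n})$. Then $n\equiv 1 \pmod 4$ and $\lfloor n/2\rfloor = 0$, so the case $n\equiv 2\pmod 4$ never occurs. The first branch of the theorem applies exactly when $m\equiv 0\pmod 4$, and it gives $\lfloor m/2\rfloor = m/2$. In every other case the theorem gives $\lfloor m/2\rfloor+1$. This is precisely the stated formula.

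\textbf{Step 2 (independent check for $m\ge 4$).} Because the proof of the tadpole theorem is only sketched, I would also verify the corollary directly. For $m\geq 4$ the Pan graph is triangle-free, so by the result of Merouane et al.\ quoted above, $\chi_{dom}(G)=\gamma_t(G)$.

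Label the cycle $x_1,\dots,x_m$ and the pendant vertex $y_1$, adjacent to $x_k$.

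\emph{Upper bound.} Take a minimum dominated coloring of $C_m$, which uses $\chi_{dom}(C_m)$ colors by Theorem \ref{uni2}. Give $y_1$ the color class that is dominated by $x_k$'s neighbour on the cycle, or any color class containing a vertex at distance two from $y_1$ through $x_k$. Since $y_1$ has only the neighbour $x_k$, the class receiving $y_1$ must be one dominated by $x_k$. I will choose the rotation of the periodic $C_m$ coloring so that $x_k$ dominates some class, namely one containing $x_{k-1}$ or $x_{k+1}$ but not $x_k$ itself. Then $y_1$ can join that class, and the coloring stays proper because $y_1$ is adjacent only to $x_k$. This gives $\chi_{dom}(G)\le \chi_{dom}(C_m)$.

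\emph{Lower bound.} Any total dominating set of $G$ must contain $x_k$ in order to dominate $y_1$. Intersecting it with the cycle gives a total dominating set of $C_m$, so $\gamma_t(G)\ge \gamma_t(C_m)=\chi_{dom}(C_m)$.

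Combining the two bounds, $\chi_{dom}(G)=\chi_{dom}(C_m)$, and Theorem \ref{uni2} then gives the formula.

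\textbf{Main obstacle.} The delicate point is the lower bound. Intersecting with the cycle totally dominates $C_m$ only if $y_1$ is not needed to dominate anything on the cycle. That holds, because $y_1$'s sole neighbour is $x_k$, and $x_k$ is already in the set. Hence if $y_1$ lies in the set, replace it by a cycle neighbour of $x_k$; this keeps the size unchanged.

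\textbf{Edge case.} I would add the hypothesis $m\ge 4$. For $m=3$ the triangle forces $\chi_{dom}(G)\ge 3$, whereas the formula gives $2$, so the statement should be read for $m\geq 4$.
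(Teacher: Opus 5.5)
Your Step 1 is how the paper gets this corollary. The paper gives no separate proof; it reads off the case $n=1$ of the tadpole theorem, where $n\equiv 1\pmod 4$, $\lfloor n/2\rfloor=0$, and the first branch reduces to $m\equiv 0\pmod 4$.

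Your Step 2 is a genuinely different route, and it is the stronger one. The paper's tadpole argument only exhibits recolorings, so it proves an upper bound and never shows that fewer colors are impossible. The corollary as derived in the paper therefore rests on an unproved lower bound. Your chain $\chi_{dom}(G)=\gamma_t(G)=\gamma_t(C_m)=\chi_{dom}(C_m)$ for $m\ge 4$ supplies both directions. It uses exactly the triangle-free identity that the paper mentions and then sets aside in favour of its ``alternative proof''.

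Step 2 also correctly exposes the missing hypothesis $m\ge 4$. For $m=3$ the Pan graph is the paw, with $\chi_{dom}=3$, while the stated formula gives $2$. The identity $\chi_{dom}(G)=\chi_{dom}(C_m)$ itself still holds at $m=3$, since both sides are $3$. What fails is the cycle formula of Theorem \ref{uni2} applied to $C_3$.

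Two wording slips in Step 2 are worth fixing. First, the class receiving $y_1$ must be dominated by $x_k$ itself, not by a neighbour of $x_k$; your next sentence states this correctly. Second, in the lower bound, $x_k\in S$ does not by itself give $x_k$ a neighbour in $S\cap V(C_m)$. What makes the intersection a total dominating set of $C_m$ is your swap of $y_1$ for $x_{k-1}$ or $x_{k+1}$.
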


A Sun graph is an unicyclic graph, obtained by attaching a pendant vertex to each vertex of a cycle graph $C_m$ of order $m$ and it is denoted by $S_m$. More generally, a path graph $P_n$ of order $n$ is attached to every vertex of $C_m$ by a bridge; the resulting graph is called a generalized sun graph, denoted by $S_m^n$. Various classes of generalized sun graphs can be constructed by attaching different types of trees randomly to the vertices of the underlying cycle. We are considering two such classes.

\begin{theorem}
    Let $G$ be a unicyclic graph formed by attaching $k$ paths $P_n$ at every vertex of the cycle $C_m$ by $k$ edges. Then the dominated chromatic number of $G$ is given by
    \begin{center}
        $\chi_{dom}(G)=\begin{cases}
            \frac{m}{2}(kn+1), & \text{if}\ m,n\equiv 0\pmod{4}\\
            \frac{kmn}{2}+\lfloor\frac{m}{2}\rfloor+1, & \text{if}\ n\equiv0\pmod{4}\ \text{and}\ m\not\equiv0\pmod{4}\\
            m[\frac{k(n-1)}{2}+1], & \text{if}\ n\equiv 1\pmod{4}\ \text{and any}\ m\\
            m[\frac{kn}{2}+1], & \text{if}\ n\equiv 2\pmod{4}\ \text{and any}\ m\\
            \frac{km(n+1)}{2}, &  \text{if}\ n\equiv 3\pmod{4}\ \text{and any}\ m.
        \end{cases}$ 
    \end{center}
\end{theorem}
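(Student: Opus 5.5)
By the result of Merouane et al.\ quoted before the tadpole theorem, $\chi_{dom}(G)=\gamma_t(G)$ for every triangle-free graph. For $m\geq 4$ the graph $G$ is triangle-free, so I will compute $\gamma_t(G)$ and treat $m=3$ separately at the end.

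\textbf{Notation and local lower bounds.} Let the cycle be $x_1,\dots,x_m$. At $x_i$ there are $k$ paths $P^{i,j}$, $j=1,\dots,k$, with vertices $y^{i,j}_1,\dots,y^{i,j}_n$. The vertex $y^{i,j}_n$ is adjacent to $x_i$, and $y^{i,j}_1$ is a leaf. Fix a total dominating set $D$ and write $D_{i,j}=D\cap V(P^{i,j})$. The vertices $y_1,\dots,y_{n-1}$ of a path can only be dominated from inside the path. The vertex $y_n$ is dominated by $y_{n-1}$ or by $x_i$. The usual leaf argument gives $y_2\in D$, and then repeats in blocks of four: $y_2,y_3$ dominate $y_1,\dots,y_4$, so the next forced pair is $y_6,y_7$, and so on. From this I would prove one lemma giving the minimum of $|D_{i,j}|$ in each residue class of $n$ modulo $4$, under each of the two hypotheses ``$x_i\in D$'' and ``$x_i\notin D$''. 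The lemma should also record whether a set attaining that minimum can contain $y_n$, since $y_n\in D$ is what lets the path dominate $x_i$. The expected values are as follows.
\begin{itemize}
\item[(a)] $n\equiv 0 \pmod 4$: the minimum is $n/2$, attained by $y_2,y_3,y_6,y_7,\dots,y_{n-2},y_{n-1}$, and it dominates $y_n$ without using $x_i$. Containing $y_n$ costs one more vertex.
\item[(b)] $n\equiv 1,2\pmod 4$: the minimum is $\lfloor n/2\rfloor$ only when $x_i\in D$, because $x_i$ is needed to dominate $y_n$ (or $y_{n-1}$). Otherwise at least one more vertex is needed.
\item[(c)] $n\equiv 3\pmod 4$: the minimum is $(n+1)/2$, attained by a set ending with the pair $y_{n-1},y_n$. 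This set contains $y_n$, which dominates $x_i$.
\end{itemize}

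\textbf{Matching the formula.} In each case I would give an explicit total dominating set reaching the stated value, and then a counting lower bound built from the lemma.
\begin{itemize}
\item $n\equiv 3$: use the sets from (c) on every path. Each $x_i$ is then dominated by $y^{i,j}_n$, and no cycle vertex needs to be in $D$. The total is $km(n+1)/2$.
\item $n\equiv 1,2$: put every $x_i$ into $D$ and use $\lfloor n/2\rfloor$ vertices per path. Each $x_i$ is dominated by its cycle neighbours, which are also in $D$. The total is $m\bigl(k\lfloor n/2\rfloor+1\bigr)$. For the lower bound, charge each cycle vertex $x_i\notin D$ against the extra vertex that (b) forces in each of its $k$ paths. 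Since $k\geq 1$, omitting $x_i$ never saves anything.
\item $n\equiv 0$: the paths use $kn/2$ vertices each, none of which touch the cycle. The cycle vertices must then be totally dominated by cycle vertices. This gives $kmn/2+\gamma_t(C_m)$, and Theorem \ref{uni2} supplies $\gamma_t(C_m)=\chi_{dom}(C_m)$, which yields the first two lines of the formula.
\end{itemize}

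\textbf{Main obstacle.} The hardest step is the lower bound in the case $n\equiv 0$. I must exclude mixed strategies: pay one extra vertex in some paths to put $y^{i,j}_n$ into $D$ so that it dominates $x_i$, or put some $x_i$ into $D$ to shorten a path. My approach is an exchange argument. Any $D$ can be converted, without increasing its size, into one whose restriction to every path is the canonical set from (a). After that conversion, $D\cap V(C_m)$ must be a total dominating set of $C_m$. Each extra path vertex is traded for a cycle vertex, and I need to verify that the trade preserves total domination of $x_i$ and its neighbours. Finally, for $m=3$ the identity $\chi_{dom}=\gamma_t$ is not available. There I would argue directly: every colour class containing a path vertex is dominated by a path vertex or by $x_i$, and the same counting goes through. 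I expect this case to need a short check of its own.
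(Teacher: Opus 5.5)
For $m\ge 4$ your argument is correct, and it takes a genuinely different route from the paper. The paper's proof of this theorem does not use total domination. It glues optimal dominated colourings of $C_m$ and of the $km$ paths, giving $\chi_{dom}(C_m)+km\,\chi_{dom}(P_n)$ colours. It then merges classes by recolouring around the stars $K_{1,k+2}$ centred at the $x_i$, and asserts without proof that the result is optimal. Your reduction to $\gamma_t(G)$ supplies the matching lower bounds that the paper omits. So do your path lemma, indexed by $n \bmod 4$, by whether $x_i\in D$ and by whether $y_n\in D$, and your charging ($n\equiv 1,2$) and exchange ($n\equiv 0$) arguments. The exchange works because (a) says $y_n\in D$ costs an extra vertex whether or not $x_i\in D$. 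A cleaner way to write it: let $T=D\cap V(C_m)$ and let $U$ be the set of $x_i$ with no neighbour in $T$. Each $x_i\in U$ forces some path at $x_i$ to cost $n/2+1$, and $T\cup\{x_{i+1}: x_i\in U\}$ totally dominates $C_m$. Hence $|D|\ge kmn/2+|T|+|U|\ge kmn/2+\gamma_t(C_m)$.

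The gap is $m=3$. Your claim that the same counting goes through is false for $n\equiv 0\pmod 4$, and there the stated formula is itself wrong. Fix a dominating vertex for each colour class. The class containing $y_t$, $t\le n-1$, has its dominator among the neighbours of $y_t$, which lie in that path. These dominators totally dominate $y_1,\dots,y_{n-1}$, so by (a) at least $n/2$ of them lie in $\{y_1,\dots,y_{n-1}\}$. The corresponding classes are contained in the path and contain no cycle vertex. The three mutually adjacent vertices $x_1,x_2,x_3$ need three further classes, so $\chi_{dom}(G)\ge 3kn/2+3$. This is attained by colouring each path with $n/2$ colours as in (a) and giving each $x_i$ its own colour. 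The formula instead gives $3kn/2+\lfloor 3/2\rfloor+1=3kn/2+2$; for $k=1$, $n=4$ that is $8$ against the true value $9$. The source is $\gamma_t(C_3)=2<3=\chi_{dom}(C_3)$. Theorem \ref{uni2} as quoted also gives the wrong value for $C_3$, so the theorem tacitly needs $m\ge 4$. For $m=3$ and $n\not\equiv 0\pmod 4$ the formula does hold. Convert your total dominating set into a colouring by sending $x_i$ to the class dominated by $x_{i+1}$ (cyclically) when $n\equiv 1,2$, and to the class dominated by $y_n$ on one of its paths when $n\equiv 3$; every class stays independent. So you should assume $m\ge 4$ in the case $n\equiv 0\pmod 4$ rather than promise a check that cannot succeed.
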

\begin{proof}
    Let $\{x_1,x_2,.\ .\ .\ ,x_m\}$ be the vertex set of the cycle $C_m$ in $G$. For $1\leq i\leq m$ and $1\leq j\leq k$, let $P_{ij}$ denote the $j^{th}$ path attached to $x_i$. Let the vertex set of $P_{ij}$ be $\{y_{ij}^1,y_{ij}^2,.\ .\ .\ y_{ij}^n\}$, where $x_iy_{ij}^1\in E(G)$. Let $f_c$ be an optimum dominated coloring of $C_m$ and for each $i$ and $j$, $f_{ij}$ be an optimum dominated coloring of the $j^{th}$ path attached at $x_i$. Then $f$ defined by
    \begin{center}
        $f(v)=\begin{cases}
            f_c(v), & if\ v\in V(C_m)\\
            f_{ij}(v), & if\ v\in V(P_{ij}).
        \end{cases}$
    \end{center}
    is a dominated coloring of $G$ that uses $km\chi_{dom}(P_n)+\chi_{dom}(C_m)$. 
    For each $i$, the vertices \\
    $y_{i1}^1y_{i2}^1,.\ .\ .\ y_{ik}^1,x_{i-1},x_{i+1}$ and $x_i$ form a $K_{1,k+2}$. Therefore, for each $i, j>1$, we can recolor the vertices $y_{ij}^1 x_{i-1}$ and $x_{i+1}$ with the color of $y_{i1}^1$. This recoloring does not reduce the number of colors required for $n\equiv0\pmod{4}$. However for $n\equiv1,2\pmod{4}$, this recoloring makes the coloring optimal and reduces the number of colors by $\chi_{dom}(C_m)+m(k-1)$. When $n\equiv3\pmod{4}$, for each $i$, the color of $x_i$ can be replaced by the color of one of the vertices in $\{y_{i1}^2,y_{i2}^2,. \ .\ .\ ,y_{ik}^2\}$. This reduces the number of colors by $\chi_{dom}(C_m)$. Hence, the dominated chromatic number of $G$ is given by,
    \begin{center}
        $\chi_{dom}(G)=\begin{cases}
            \chi_{dom}(C_m)+km\chi_{dom}(P_n), & \text{if}\ n\equiv0\pmod{4}\\
            km\chi_{dom}(P_n)-m(k-1), &\text{if}\ n\equiv1\ \text{or} \ 2\pmod{4}\\
            km\chi_{dom}(P_n),& \text{if} n\equiv 3\pmod{4}.
        \end{cases}$
    \end{center}
    Theorem \ref{uni2} yield the desired result.
\end{proof}
\begin{corollary}
    The dominated chromatic number of the generalized sun graph $S_m^n$ is
    \begin{center}
        $\chi_{dom}(S_m^n)=
        \begin{cases}
            \frac{m(n+1)}{2} & ,\text{if}\ m,n\equiv0\pmod{4} \\
            \frac{mn}{2}+\lfloor\frac{m}{2}\rfloor+1 &, \text{if}\ m\not\equiv 0\pmod{4}\ \text{and} \ n\equiv0\pmod{4}\\
            m(\lfloor\frac{n}{2}\rfloor+1) & \text{if}\  n\not\equiv0\pmod{4}\ \text{and any} \ m .
        \end{cases}$
    \end{center}
\end{corollary}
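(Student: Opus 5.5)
The plan is to obtain this corollary directly from the preceding theorem by specializing to $k=1$. The generalized sun graph $S_m^n$ is, by definition, the unicyclic graph obtained by attaching exactly one path $P_n$ by a bridge at every vertex of $C_m$. This is precisely the graph of the preceding theorem with $k=1$. So I will not build a new colouring. I will substitute $k=1$ into each of the five cases of that theorem and then merge the cases that give the same expression.

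For the two cases with $n\equiv 0\pmod 4$, substituting $k=1$ gives $\frac{m}{2}(n+1)=\frac{m(n+1)}{2}$ when $m\equiv 0\pmod 4$. It gives $\frac{mn}{2}+\lfloor\frac{m}{2}\rfloor+1$ when $m\not\equiv 0\pmod 4$. These are the first two lines of the corollary.

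For $n\not\equiv 0\pmod 4$ there are three subcases, and I will check that each equals $m(\lfloor\frac{n}{2}\rfloor+1)$.
\begin{itemize}
\item If $n\equiv 1\pmod 4$, the theorem gives $m[\frac{n-1}{2}+1]$, and $\frac{n-1}{2}=\lfloor\frac{n}{2}\rfloor$.
\item If $n\equiv 2\pmod 4$, it gives $m[\frac{n}{2}+1]$, and $\frac{n}{2}=\lfloor\frac{n}{2}\rfloor$.
\item If $n\equiv 3\pmod 4$, it gives $\frac{m(n+1)}{2}=m\left(\frac{n-1}{2}+1\right)=m(\lfloor\frac{n}{2}\rfloor+1)$.
\end{itemize}
All three coincide, which gives the third line for every $m$.

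The only step needing care is the $n\equiv 3\pmod 4$ case. There the theorem's formula has a different shape, $\frac{km(n+1)}{2}$, and it must be rewritten as $m(\lfloor\frac{n}{2}\rfloor+1)$ to merge with the other two subcases. This is routine arithmetic with floors, so I expect no real obstacle. The correctness of the corollary rests entirely on the preceding theorem.
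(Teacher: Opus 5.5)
Your proposal is correct and is exactly the paper's argument. The paper's proof is just the remark that $S_m^n$ is the $k=1$ case of the preceding theorem, and you make explicit the substitution and case-merging it leaves implicit (notably rewriting $\frac{m(n+1)}{2}$ as $m(\lfloor\frac{n}{2}\rfloor+1)$ when $n\equiv 3\pmod 4$).

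One caveat you share with the paper: the preceding theorem tacitly assumes $m\geq 4$, and the formula fails for $m=3$, $n\equiv 0\pmod 4$. For example, $\chi_{dom}(S_3^4)=9$ rather than the predicted $8$: the classes of $y_i^3$ and $y_i^4$ lie inside the $i$-th pendant path, giving six classes, and the triangle needs three more. This is a defect of the statement rather than of your specialization.
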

\begin{proof}
The graph $S_m^n$ can be obtained as a special case of the graph considered in the previous theorem by taking $k=1$.
\end{proof}
\begin{corollary}
    The dominated chromatic number of the sun graph $S_m$ is 
    \begin{center}
        $\chi_{dom}(S_m)=m$
    \end{center}
\end{corollary}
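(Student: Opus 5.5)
The plan is to obtain the statement as the special case $n=1$ of the preceding corollary on generalized sun graphs, since $S_m=S_m^1$. For $n=1$ we have $n\equiv 1\pmod 4$, so $n\not\equiv 0\pmod 4$, and the third case of that corollary gives $\chi_{dom}(S_m)=m(\lfloor\frac{1}{2}\rfloor+1)=m$ for every $m$. Because the preceding arguments rely on recolorings whose optimality is only sketched, I would also give a short self-contained proof with matching lower and upper bounds.

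\emph{Lower bound.} Let $V(C_m)=\{x_1,\dots,x_m\}$ and let $y_i$ be the pendant vertex attached to $x_i$. In any dominated coloring, the color class containing $y_i$ must be dominated by a neighbor of $y_i$. The only neighbor of $y_i$ is $x_i$. If $y_i$ and $y_j$ with $i\neq j$ had the same color, a single vertex would have to be adjacent to both. That vertex would equal $x_i$ and $x_j$ at once, which is impossible. Hence the $m$ pendant vertices receive pairwise distinct colors, so $\chi_{dom}(S_m)\geq m$.

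\emph{Upper bound.} I would give an explicit coloring with $m$ colors, taking indices modulo $m$. Assign color $i$ to $y_i$ and to $x_{i+1}$, so that color class $i$ is $\{y_i,x_{i+1}\}$.
\begin{itemize}
\item Each class is dominated by $x_i$, which is adjacent to both $y_i$ and $x_{i+1}$.
\item The coloring is proper. Two consecutive cycle vertices $x_{i+1}$ and $x_{i+2}$ get colors $i$ and $i+1$, which differ. A pendant edge $x_{i+1}y_{i+1}$ gets colors $i$ and $i+1$, which also differ. Finally, $y_i$ and $x_{i+1}$ are non-adjacent whenever $m\geq 3$.
\end{itemize}
So $\chi_{dom}(S_m)\leq m$, and together with the lower bound this proves the claim.

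There is no real obstacle here. The only care needed is the observation that a pendant vertex's class can only be dominated by its unique neighbor, which drives the lower bound, and the check that the shifted coloring remains proper around the whole cycle.
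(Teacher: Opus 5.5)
Your first paragraph is exactly the paper's proof. The paper only notes that $S_m$ is the case $n=1$ of $S_m^n$ and reads the value off the preceding corollary.

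Your self-contained argument is a genuinely different and more solid route. The paper's chain rests on the theorem for $k$ attached paths. That proof exhibits a recoloring and asserts it is optimal, so no lower bound is ever proved. The recoloring step is also unclear as written: it puts $x_{i-1}$ and $x_{i+1}$ into the class of $y_{i1}^1$. If this is done for every $i$, each cycle vertex $x_l$ is recolored twice, once from $i=l-1$ and once from $i=l+1$. For $m=3$, the same class would contain the adjacent pair $x_{i-1},x_{i+1}$.

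Your observation that a leaf's class can only be dominated by its unique neighbor gives the lower bound $m$ directly. Your shifted classes $\{y_i,x_{i+1}\}$, each dominated by $x_i$, give a consistent matching upper bound.

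The paper's route buys brevity once the general theorem is granted. Yours buys independence from it, and it extends verbatim to the next corollary with $k$ leaves per cycle vertex: put all leaves at $x_i$ together with $x_{i+1}$ in class $i$. Your lower bound is also an instance of a general fact: $\chi_{dom}(G)$ is at least the number of support vertices of $G$.
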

\begin{proof}
The sun graph is a special case of the graph $S_m^n$ in which $n=1$.
\end{proof}
\begin{corollary}
    Let $G$ be a graph formed by attaching $k$ pendant vertices to each  vertex of a cycle graph $C_m$. Then the dominated chromatic number of $G$ is given by
    \begin{center}
        $\chi_{dom}(G)=m$
    \end{center}
\end{corollary}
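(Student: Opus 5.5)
The quickest route is to specialize the preceding theorem to $n=1$: attaching $k$ pendant vertices at every $x_i$ is exactly attaching $k$ copies of $P_1$ by $k$ edges. Since $1\equiv 1\pmod 4$, the third case gives $m[\frac{k(1-1)}{2}+1]=m$. Because the preceding theorem's proof relies on a recoloring argument, I would also give a short direct proof with an explicit upper and lower bound.

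\emph{Upper bound.} Let the cycle be $x_1,\dots,x_m$ and let the pendant vertices at $x_i$ be $y_{i1},\dots,y_{ik}$. Give $x_i$ the color $c_{i+1}$ (indices mod $m$), and give all of $y_{i1},\dots,y_{ik}$ the color $c_i$. Then the color class $c_i$ is $\{x_{i-1}\}\cup\{y_{i1},\dots,y_{ik}\}$.
\begin{itemize}
\item This class is independent. The pendant vertices at $x_i$ are mutually nonadjacent, and $x_{i-1}$ is adjacent to none of them.
\item This class is dominated by $x_i$. The vertex $x_i$ is adjacent to $x_{i-1}$ and to every $y_{ij}$.
\item The coloring is proper. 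The cycle receives a proper coloring because consecutive cycle vertices get distinct colors, which needs $m\ge 3$. Each pendant edge $x_iy_{ij}$ joins colors $c_{i+1}$ and $c_i$, which are distinct.
\end{itemize}
So $\chi_{dom}(G)\le m$.

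\emph{Lower bound.} In any dominated coloring, consider the pendant vertex $y_{i1}$. Its only neighbor is $x_i$, so the vertex dominating the color class of $y_{i1}$ must be $x_i$. Hence the classes containing $y_{11},\dots,y_{m1}$ are dominated by $x_1,\dots,x_m$ respectively. Suppose two of these pendant vertices, $y_{i1}$ and $y_{j1}$ with $i\neq j$, shared a color. Then a single vertex would dominate both, so it would be adjacent to both. The only neighbors of $y_{i1}$ and $y_{j1}$ are $x_i$ and $x_j$, and no vertex equals both. This contradiction shows the $m$ vertices $y_{11},\dots,y_{m1}$ get pairwise distinct colors, so $\chi_{dom}(G)\ge m$.

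No step is a real obstacle. The only care needed is the degenerate small-cycle case and the check that each class containing pendants is independent, which holds since $x_{i-1}$ is not adjacent to the pendants at $x_i$. Combining the two bounds gives $\chi_{dom}(G)=m$.
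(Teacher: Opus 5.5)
Your first paragraph is exactly the paper's proof, which consists only of setting $n=1$ in the preceding theorem. The direct argument you add is a genuinely different, self-contained route, and it is correct. The upper bound comes from shifting the cycle colors by one, so that class $c_i$ is $\{x_{i-1}\}\cup\{y_{i1},\dots,y_{ik}\}$, dominated by $x_i$; this also works for $m=3$. The lower bound comes from the fact that a class containing a pendant at $x_i$ can only be dominated by $x_i$, so pendants at distinct cycle vertices land in distinct classes.

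The paper's specialization buys brevity and places the corollary inside the general family. However, it inherits the weaknesses of the preceding theorem's proof:
\begin{itemize}
\item For $n=1$, that proof starts from optimum dominated colorings of one-vertex paths, and these do not exist, since an isolated vertex cannot be dominated.
\item It asserts that the recolored coloring is optimal without giving any lower-bound argument.
\item Read literally, it recolors both $x_{i-1}$ and $x_{i+1}$ with the color of $y_{i1}^1$ for every $i$, which would give each cycle vertex two different colors.
\end{itemize}
Your rotation, which puts only $x_{i-1}$ into the pendant class at $x_i$, is the consistent version of that recoloring. Your neighborhood argument supplies the lower bound that the paper's chain of results leaves unproved.

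One thing to make explicit: the lower bound uses $k\ge 1$. For $k=0$ the graph is $C_m$, and the formula fails once $m\ge 4$.
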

\begin{proof}
    The graph $G$ is a special case of the graph considered in the previous theorem obtained by taking $n=1$.
\end{proof}
\begin{theorem}
    Let $G$ be a generalized sun graph formed by attaching exactly two leaves to a cycle graph $C_n$ of order $n$, and let $d$ be the distance between the vertices to which the leaves are attached. Then the dominated chromatic number of G is,
    \begin{center}
        $\chi_{dom}(G)=\begin{cases}
           \frac{n}{2}+1, & \text{if} \ n\equiv0\pmod{4} \ \text{and}\ d=2\\
           \frac{n}{2}, & \text{if}\ n\equiv0\pmod{4}\ \text{and}\ d\neq2\\
           \lfloor\frac{n}{2}\rfloor+1, & \text{if}\ n\not\equiv0\pmod{4}.
        \end{cases}$
    \end{center}
\end{theorem}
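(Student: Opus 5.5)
The plan is to reduce the statement to a total domination computation and then handle the cycle combinatorially. For $n\ge 4$ the graph $G$ (the cycle $C_n$ with leaves $u'$, $v'$ attached at $u,v$, $d(u,v)=d$) is triangle-free, so by the result of Merouane et al.\ quoted before the tadpole theorem, $\chi_{dom}(G)=\gamma_t(G)$. The small case $n=3$ would be checked by hand.

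\textbf{Step 1 (normal form).} Let $S$ be a total dominating set of $G$. The leaves $u'$, $v'$ can only be dominated by $u$, $v$, so $u,v\in S$. If $u'\in S$, replacing $u'$ by a cycle neighbour of $u$ keeps $S$ totally dominating and does not increase $|S|$; the same holds for $v'$. Hence $\gamma_t(G)$ equals the minimum size of a total dominating set of $C_n$ that contains both $u$ and $v$. In particular $\gamma_t(G)\ge\gamma_t(C_n)$, and Theorem~\ref{uni2} gives the lower bounds $n/2$ when $n\equiv0\pmod 4$ and $\lfloor n/2\rfloor+1$ otherwise.

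\textbf{Step 2 (case $n\equiv0\pmod4$).} Since $\gamma_t(C_n)=n/2$ and each vertex of $C_n$ dominates exactly two vertices, a total dominating set of size $n/2$ must dominate every vertex exactly once. Label the cycle by $\mathbb{Z}_n$. The exact-domination condition forces $S$ to be one of the four ``shifts'' $\{j: j\bmod 4\in\{s,s+1\}\}$, $s=0,1,2,3$; this is shown by following the forced pattern of pairs of chosen vertices separated by gaps of length two. Two vertices at cyclic distance $d$ lie in a common shift if and only if $d\not\equiv 2\pmod 4$. Note that $n-d\equiv -d\pmod 4$, so this condition does not depend on which way round the cycle is measured. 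Hence:
\begin{itemize}
\item if $d\not\equiv 2\pmod 4$, then $\gamma_t(G)=n/2$;
\item if $d\equiv 2\pmod 4$, then $\gamma_t(G)\ge n/2+1$, and a shift containing $u$ together with one extra vertex adjacent to $v$ gives the matching upper bound.
\end{itemize}
The statement phrases the exceptional case as $d=2$. The argument I expect yields the condition $d\equiv2\pmod4$, which includes $d=2$. I would therefore check a case such as $n=12$, $d=6$ carefully and, if needed, state the condition in this form.

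\textbf{Step 3 (case $n\not\equiv0\pmod4$).} Only the upper bound $\lfloor n/2\rfloor+1$ is needed, that is, a total dominating set of $C_n$ of that size containing any two prescribed vertices. I would start from an optimal set made of consecutive pairs $\{j,j+1\}$ with gaps of two. The residue $r=n\bmod4\in\{1,2,3\}$ leaves at least one unit of slack, and I would place this slack (a shorter gap or a triple) between $u$ and $v$ so that both lie in the set. The construction splits by residue: for $r=2,3$ there is enough room to shift the pair pattern on each of the two arcs between $u$ and $v$ independently, while $r=1$ is tightest.

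The main obstacle is Step 2: proving rigidly that the only minimum total dominating sets of $C_{4q}$ are the four shifts, and then the parity bookkeeping of $d$. The construction in Step 3 for $r=1$ is the secondary difficulty, and I would treat it by writing out the explicit arc lengths $d$ and $n-d$ modulo $4$.
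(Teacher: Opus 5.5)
Your route differs from the paper's, and your suspicion about the exceptional case is correct: the statement is false as written. The paper argues directly with colorings. It asserts $\chi_{dom}(G)\ge\chi_{dom}(C_n)$ because $C_n$ is a subgraph. For $d\neq 2$ it gives the leaves $y_1,y_2$ the colors of $x_{i-1},x_{j+1}$. For $d=2$, $n\equiv 0\pmod 4$, it argues that every class of an optimal coloring of $C_n$ has two vertices, so one leaf needs a new color. You instead pass to $\gamma_t$ via triangle-freeness, move the total dominating set onto the cycle, and use the rigidity of minimum total dominating sets of $C_{4q}$.

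Your route gives two things the paper's argument lacks. First, a justified lower bound. $\chi_{dom}$ is not monotone even under induced subgraphs: the fan $K_1+P_6$ has $\chi_{dom}=3<4=\chi_{dom}(P_6)$. So the paper's first inequality really needs your Step 1. Second, the correct exceptional set. Giving $y_1$ the color of $x_{i-1}$ forces that class to be dominated by $x_i$. In an optimal coloring of $C_{4q}$ every class is $N(s)$ for $s$ in one of your four shifts, so $x_i$ and $x_j$ can both be dominators only when $d\not\equiv 2\pmod 4$. Hence the paper's Case 1 breaks for $d\equiv 2\pmod 4$, $d\ge 6$.

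Concretely, take $n=12$ with leaves at $0$ and $6$, so $d=6$. The minimum total dominating sets of $C_{12}$ are $\{0,1,4,5,8,9\}$ and its rotations by $1,2,3$, and none contains both $0$ and $6$. Hence $\chi_{dom}(G)=\gamma_t(G)=7=n/2+1$, while the statement claims $6$. You should state and prove the theorem with the condition $d\equiv 2\pmod 4$; no argument can give the version with $d=2$.

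Three repairs are needed.

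(i) For the upper bound when $d\equiv 2\pmod 4$, add $v$ itself to the shift containing $u$, not a neighbor of $v$. The leaf at $v$ is dominated only by $v$.

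(ii) Your hand check at $n=3$ will not confirm the formula. $G$ contains $K_3$, so $\chi_{dom}(G)\ge\chi(G)=3$, and $3$ is attained, whereas $\lfloor 3/2\rfloor+1=2$. The statement, like the cycle formula it rests on, needs $n\ge 4$.

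(iii) Step 3 is true for every $d$, but your description is off for $r=3$. A minimum total dominating set of $C_{4q+3}$ has all runs of length two and exactly one gap of length one. There is a single defect, so you cannot adjust the two arcs independently. A uniform finish: for every $n\ge 4$ the set $S=\{j\in\{0,\dots,n-1\}: j\equiv 0,1\pmod 4\}$ is a minimum total dominating set of $C_n$. When $4\nmid n$ it contains a pair of vertices at every cyclic distance $d$:
\begin{itemize}
\item $\{0,d\}$ for $d\equiv 0,1\pmod 4$;
\item $\{1,d+1\}$ for $d\equiv 3\pmod 4$;
\item $\{0,n-d\}$ for $d\equiv 2\pmod 4$ when $r\in\{2,3\}$, and $\{1,n+1-d\}$ when $r=1$.
\end{itemize}
Some rotation of $S$ therefore contains any prescribed $u,v$.
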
 
\begin{proof}
Let
\begin{center}
    $\{x_1,x_2,.\ .\ .\ ,x_n\} \cup \{y_1,y_2\} $
\end{center}
be the vertices of $G$ where $x_i's$ spans the cycle $C_n$ and $y_1$ and $y_2$ are the pendant vertices attached to $x_i$ and $x_j$ for some $i$ and $j (1\leq i,j\leq n)$.

Since $G$ contains $C_n$ as a subgraph,
\begin{center}
    $\chi_{dom}(G)\geq\chi_{dom}(C_n)$.
\end{center}
We consider different possible cases for $n$ and $d$ to prove the theorem.
The vertices $y_1,x_{i-1}$ and $x_{i+1}$ forms a $K_{1,3}$ while the vertices $y_2,x_{j-1}$ and $x_{j+1}$ forms a $K_{1,3}$.\\

\noindent Case (1): $d\neq2$ \\
For any $n$, we can reuse the colors of $x_{i-1}$ and $x_{j+1}$ to the vertices $y_1$ and, $y_2$ respectively. Therefore, we can color $G$ using exactly $\chi_{dom}(C_n)$ colors. \\

\noindent Case (2): $d=2$ \\
Unless $n\equiv0\pmod{4}$, we can color $G$ using exactly $\chi_{dom}(C_n)$ colors by reusing colors as in the first case. When $n\equiv0\pmod{4}$, in an optimal dominated coloring of $C_n$, each color class contains exactly two vertices. Therefore, for any one of the vertices $y_1,y_2$ , we can reuse the color in $C_n$ and the other one should receive an additional color. In this case the dominated coloring of $G$ require $\chi_{dom}(G)+1$ colors.\\
Hence the dominated chromatic number of $G$ is given by,\\
\begin{center}
   $\chi_{dom}(G)=\begin{cases}
       \chi_{dom}(C_n)+1,& \text{if}\ d=2\ \text{and}\ n\equiv0\pmod{4}\\
       \chi_{dom}(C_n),& \text{otherwise}.
   \end{cases}$
\end{center}
Theorem\ref{uni2} gives the desired result.
\end{proof}

\bibliographystyle{plain}
\bibliography{references}
\end{document}